\newcommand{\op}{\ensuremath{^{\mathrm{op}}}}
\newcommand{\bA}{\mathbb{A}}
\newcommand{\bZ}{\mathbb{Z}}
\newcommand{\cC}{{\mathcal C}}
\newcommand{\cE}{{\mathcal E}}
\newcommand{\cF}{{\mathcal F}}
\newcommand{\cG}{{\mathcal G}}
\newcommand{\cM}{{\mathcal M}}
\newcommand{\cP}{{\mathcal P}}
\newcommand{\cS}{{\mathcal S}}
\newcommand{\cU}{{\mathcal U}}
\newcommand{\cX}{{\mathcal X}}
\newcommand{\cY}{{\mathcal Y}}
\newcommand{\md}{\operatorname{mod}}
\newcommand{\Md}{\operatorname{Mod}}
\newcommand{\perf}{\operatorname{perf}}
\newcommand{\emphbf}[1]{\emph{\textbf{#1}}}
\newcommand{\Hom}{\operatorname{Hom}}
\newcommand{\Ext}{\operatorname{Ext}}
\newcommand{\Ker}{\operatorname{Ker}}
\newcommand{\Coker}{\operatorname{Coker}}
\newcommand{\colim}{\operatorname{colim}}
\title{$d\mathbb{Z}$-cluster tilting subcategories of singularity categories}
\date{\today}
\keywords{Cluster tilting; Exact category; Homological algebra; Singularity category; Triangulated category}
\author{Sondre Kvamme}
\address{Sondre Kvamme\\
Department of Mathematics, Uppsala University 
\\ 75106 Uppsala, Sweden
} \email{sondre.kvamme@math.uu.se}
\begin{document}

\newtheorem{Theorem}[equation]{Theorem}
\newtheorem{Lemma}[equation]{Lemma}
\newtheorem{Corollary}[equation]{Corollary}
\newtheorem{Proposition}[equation]{Proposition}
\newtheorem{Conjecture}[equation]{Conjecture}

\theoremstyle{definition}
\newtheorem{Definition}[equation]{Definition}
\newtheorem{Example}[equation]{Example}
\newtheorem{Remark}[equation]{Remark}
\newtheorem{Setting}[equation]{Setting}

\thanks{The author was supported by a public grant as part of the FMJH. He would like to thank the anonymous referee for several useful comments and corrections}

\subjclass[2010]{18E10, 18E30, 16E65;}

\begin{abstract}
For an exact category $\cE$ with enough projectives and with a $d\bZ$-cluster tilting subcategory, we show that the singularity category of $\cE$ admits a $d\bZ$-cluster tilting subcategory. To do this we introduce cluster tilting subcategories of left triangulated categories, and we show that there is a correspondence between cluster tilting subcategories of $\cE$ and $\underline{\cE}$. We also deduce that the Gorenstein projectives of $\cE$ admit a $d\bZ$-cluster tilting subcategory under some assumptions. Finally, we compute the $d\bZ$-cluster tilting subcategory of the singularity category for a finite-dimensional algebra which is not Iwanaga-Gorenstein.
\end{abstract}

\maketitle

\setcounter{tocdepth}{2}
\numberwithin{equation}{section}
\tableofcontents

\section{Introduction}
Auslander-Reiten theory is a fundamental tool to describe the module category of finite-dimensional algebras, see \cite{ARS95} and \cite{ASS06,SS07a,SS07}. A generalization of this theory, called higher Auslander-Reiten theory, was introduced by Iyama in \cite{Iya07a} and further developed in \cite{Iya07,Iya11}. In this case, the objects of study are module categories equipped with a $d$-cluster tilting subcategory. We refer to \cite{AO14,GKO13,HI11a,HI11,HIO14,IJ17,IO11,IO13,IY08,Jas16,JK19,J16} for some other important papers. Also, see \cite{Iya08} for a survey of the theory and \cite{JK19a} for an introduction.

Let $\Lambda$ be a finite-dimensional algebra and let $\md \Lambda$ be the category of finitely generated right $\Lambda$-modules. Assume $\Lambda$ has global dimension $d$. If $\cM$ is a $d$-cluster tilting subcategory in $\md \Lambda$ then the subcategory 
\[
\cU= \operatorname{add}\{M[di]\in D^b(\md \Lambda) \mid M\in \cM \text{ and }i\in \bZ\}
\]
is $d$-cluster tilting inside the bounded derived category $D^b(\md \Lambda)$ \cite[Theorem 1.21]{Iya11} (this can be extended to $\tau_d$-finite algebras \cite[Theorem 1.23]{Iya11} and cluster categories \cite[Theorem 4.10]{Ami09} and \cite[Theorem 2.2]{Guo11}). The subcategory $\cU$ can be considered as a higher analogue of the derived category of a hereditary algebra. On the other hand, if $\Lambda$ does not have global dimension $d$, then there is no known cluster tilting subcategory inside $D^b(\md \Lambda)$ in general. As shown in \cite{JK16}, the naive approach doesn't necessarily give a cluster tilting subcategory even when $\cM$ is $d\bZ$-cluster tilting.

In this paper we consider instead the singularity category
\[
D_{\operatorname{sing}}(\Lambda):=K^{-,b}(\operatorname{proj} \Lambda)/K^b(\operatorname{proj} \Lambda)
\]
where $K^b(\operatorname{proj} \Lambda)$ and $K^{-,b}(\operatorname{proj} \Lambda)$ denote the bounded homotopy category of finitely generated projective modules and the right bounded homotopy category with bounded homology of finitely generated projective modules, respectively. The singularity category was introduced by Buchweitz in \cite{Buc86} as an useful invariant of the ring $\Lambda$. Via the equivalence $K^{-,b}(\operatorname{proj} \Lambda)\cong D^b(\md \Lambda)$ we get an equivalence
\begin{equation}\label{how to write singularity category}
K^{-,b}(\operatorname{proj} \Lambda)/K^b(\operatorname{proj} \Lambda)\cong D^b(\md \Lambda)/\operatorname{perf} \Lambda
\end{equation}
and hence $D_{\operatorname{sing}}(\Lambda)$ can be realized as a quotient of $D^b(\md \Lambda)$. We show that if $\md \Lambda$ has a $d\bZ$-cluster tilting subcategory, then $D_{\operatorname{sing}}(\Lambda)$ has a $d\bZ$-cluster tilting subcategory.  In fact, we show this more generally for any exact category with enough projectives.

\begin{Theorem}\label{Main theorem intro}
Let $\cE$ be an exact category with enough projectives $\cP$ and with a $d\bZ$-cluster tilting subcategory $\cM$. Then the subcategory 
\[
\{P^{\bullet}\in K^{-,b}(\cP)/K^b(\cP) \mid Z^{di}(P^{\bullet})\in \cM \text{ for all } i\ll 0\}
\]
is a $d\bZ$-cluster tilting subcategory of $K^{-,b}(\cP)/K^b(\cP)$.
\end{Theorem}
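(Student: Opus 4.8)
The plan is to compare the singularity category $D_{\operatorname{sing}}(\cE):=K^{-,b}(\cP)/K^b(\cP)$ with the stable category $\underline{\cE}=\cE/[\cP]$, which is a left triangulated category with loop functor $\Omega$, and to transport the $d\bZ$-cluster tilting property of $\cM$ along this comparison. Recall the canonical functor $\cE\to D_{\operatorname{sing}}(\cE)$ sending $M$ to a deleted projective resolution; it kills $\cP$, factors through $\underline{\cE}$, and intertwines $\Omega$ with $\Sigma^{-1}$ (the triangle coming from $0\to\Omega M\to P\to M\to 0$ shows $\Sigma^{-1}\underline{M}\cong\underline{\Omega M}$). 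First I would pin down the objects of $\cU$. If $P_{\bullet}\in K^{-,b}(\cP)$ has bounded homology, then for $n\ll 0$ the complex is exact at $P^{n}$, the tail in degrees $\le n$ is a projective resolution of the cycle $Z_{n}(P_{\bullet})$, and the bounded part in degrees $>n$ lies in $K^b(\cP)$ and so vanishes in $D_{\operatorname{sing}}(\cE)$. Choosing $n=dm$ with $m\ll 0$ gives $P_{\bullet}\cong\Sigma^{-dm}\underline{Z_{dm}(P_{\bullet})}$, so every object of $\cU$ is isomorphic to $\Sigma^{dm}\underline{M}$ with $M\in\cM$. Conversely, since $\cM$ is $d\bZ$-cluster tilting in $\cE$ it contains $\cP$ and is closed under $\Omega^{d}$ (because $\Ext^{j}_{\cE}(\cM,\Omega^{d}M)\cong\Ext^{j+d}_{\cE}(\cM,M)=0$ for $j\in\bZ_{>0}\setminus d\bZ$), so the far-negative cycles of $\Sigma^{dm}\underline{M}$ are iterated $d$-th syzygies of $M$ and hence lie in $\cM$; thus each $\Sigma^{dm}\underline{M}$ lies in $\cU$. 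Hence
\[
\cU=\operatorname{add}\{\Sigma^{dm}\underline{M}\mid M\in\cM,\ m\in\bZ\},
\]
and in particular $\cU$ is closed under $\Sigma^{\pm d}$.

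The next step is the rigidity $\Hom_{D_{\operatorname{sing}}(\cE)}(\cU,\Sigma^{i}\cU)=0$ for $i\in\bZ\setminus d\bZ$. By the description above and $\Sigma^{\pm d}$-stability it suffices to treat $\Hom_{D_{\operatorname{sing}}(\cE)}(\underline{M},\Sigma^{-i}\underline{N})$ for $M,N\in\cM$ and $0<i<d$. Here I would use that morphisms in the stabilization are a colimit over syzygies,
\[
\Hom_{D_{\operatorname{sing}}(\cE)}(\underline{M},\Sigma^{-i}\underline{N})\cong\varinjlim_{n}\underline{\Hom}_{\cE}(\Omega^{n}M,\Omega^{n+i}N),
\]
and that by dimension shifting these stable Hom groups are governed by the $\Ext$-groups $\Ext^{i}_{\cE}(\cM,\cM)$ in degrees congruent to $i$ modulo $d$. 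Since $\cM$ is $d\bZ$-cluster tilting, $\Ext^{j}_{\cE}(\cM,\cM)=0$ for every $j\in\bZ_{>0}\setminus d\bZ$, and the relevant degrees are never multiples of $d$, so all these groups vanish; this gives rigidity.

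It remains to verify the two defining equalities and functorial finiteness. For completeness, suppose $X\in D_{\operatorname{sing}}(\cE)$ satisfies $\Hom_{D_{\operatorname{sing}}(\cE)}(\cU,\Sigma^{i}X)=0$ for all $i\in\bZ\setminus d\bZ$. By the truncation argument we may write $X\cong\Sigma^{dm}\underline{C}$ with $C\in\cE$ a high syzygy. The hypothesis then translates, via the colimit above, into $\Ext^{j}_{\cE}(\cM,C)=0$ for all $j\in\bZ_{>0}\setminus d\bZ$, and the characterisation $\cM=\{Y\in\cE\mid\Ext^{j}_{\cE}(\cM,Y)=0 \text{ for } j\in\bZ_{>0}\setminus d\bZ\}$ forces $C\in\cM$, whence $X\cong\Sigma^{dm}\underline{C}\in\cU$; the symmetric equality follows by the same argument using the other characterisation of $\cM$. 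Finally, functorial finiteness of $\cU$ is inherited from that of $\cM$: the $d$-term $\cM$-(co)resolutions provided by the cluster tilting property of $\cM$ in $\cE$ yield, after applying the comparison functor and rotating triangles, left and right $\cU$-approximations of $\Sigma^{dm}\underline{C}$ in $D_{\operatorname{sing}}(\cE)$.

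The main obstacle is precisely the passage between $\cE$ and $D_{\operatorname{sing}}(\cE)$: the comparison functor $\underline{\cE}\to D_{\operatorname{sing}}(\cE)$ is not fully faithful in general, so the identifications of the $\Hom$-groups above with honest $\Ext$-groups in $\cE$ must be controlled through the colimit/stabilization and through high syzygies. This is exactly the role of the left triangulated structure on $\underline{\cE}$ and of the correspondence between cluster tilting subcategories of $\cE$ and of $\underline{\cE}$; making that correspondence precise, and checking that it preserves functorial finiteness, is where the real work lies.
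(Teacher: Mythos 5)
Your identification of the objects of the subcategory (as $\operatorname{add}\{\Sigma^{dm}\underline{M}\mid M\in\cM,\ m\in\bZ\}$, i.e.\ the paper's $d\bZ\underline{\cM}$) and your rigidity argument are sound, and they follow the same route as the paper: Hom-groups in the stabilization $\bZ\underline{\cE}\cong K^{-,b}(\cP)/K^b(\cP)$ are filtered colimits of stable Hom-groups in $\underline{\cE}$, and on a cofinal set of indices these terms are quotients of groups $\Ext^{j}_{\cE}(\cM,\cM)$ with $j\notin d\bZ$, hence zero, so the colimits vanish. The genuine gap is in your completeness step. You claim that $\Hom(\cU,\Sigma^{i}X)=0$ for all $i\notin d\bZ$ ``translates into'' $\Ext^{j}_{\cE}(\cM,C)=0$ where $X\cong\Sigma^{dm}\underline{C}$. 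This runs the filtered colimit in the wrong direction: vanishing of
\[
\colim_{k}\,\underline{\Hom}_{\cE}(\Omega^{k}M,\Omega^{k-j}C)
\]
only says that every class is killed by $\Omega^{N}$ for some $N\gg 0$; it does not force any individual term (in particular not the term computing $\Ext^{j}_{\cE}(M,C)$) to vanish. Indeed, by Lemma \ref{Syzygy functor full and faithful} the transition maps $f\mapsto\Omega(f)$ are isomorphisms only at steps where $\Ext^{1}_{\cE}(\Omega^{k}M,\cP)\cong\Ext^{k+1}_{\cE}(M,\cP)=0$, and for $M\in\cM$ this can fail whenever $d\mid k+1$ --- i.e.\ infinitely often, so the system has potential kernels beyond every index. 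Replacing $C$ by a high syzygy $\Omega^{dl}C$ does not repair this, since the same jumps occur past every $l$; and even if for each pair $(M,j)$ some term vanished, the cluster tilting condition requires a single $l$ working simultaneously for all $M\in\cM$ and all $j$, which the colimit hypothesis does not provide.

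This step is exactly where the content of the theorem lies, and the paper circumvents it rather than solving it head on: it never converts Hom-vanishing in the singularity category back into Ext-vanishing in $\cE$. Instead it records the data that descend for free --- the $(d+2)$-angles $0\to X\to U^{1}\to\cdots\to U^{d}\to 0$ and $0\to U_{d}\to\cdots\to U_{1}\to X\to 0$ with terms in $d\bZ\underline{\cM}$, obtained from the $\cM$-(co)resolutions in $\cE$ (Proposition \ref{cluster tilting from exact to stable} and Theorem \ref{stablization d-cluster tilting}) --- together with rigidity and closure under summands, and then proves purely inside the triangulated category (Proposition \ref{equivalence of definitions}) that these data imply both Ext-characterization equalities and functorial finiteness: if $\Hom(\Omega^{i}\cU,X)=0$ for $1\leq i\leq d-1$, then the connecting morphism of the coresolution $(d+2)$-angle of $X$ vanishes, the triangle $\Omega(U^{1.5})\to X\to U^{1}\to U^{1.5}$ splits, and $X$ is a direct summand of $U^{1}\in\cU$. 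To repair your proof, replace the ``translate back to $\Ext$'' step by this splitting argument; your functorial-finiteness sketch, with triangles induced by the $\cM$-resolutions, already contains most of the needed ingredients.
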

Via the equivalence \eqref{how to write singularity category} this corresponds to the subcategory
\[
\{C^{\bullet}\in D^b(\md \Lambda)/\operatorname{perf} \Lambda \mid C^{\bullet}\cong M[di] \text{ for some } M\in \cM \text{ and } i\in \bZ\}
\]
in $D^b(\md \Lambda)/\operatorname{perf}\Lambda$. Notice that this subcategory is closed under direct sums since any object in $D^b(\md \Lambda)/\operatorname{perf}\Lambda$ is isomorphic to a stalk complex. There are many examples of $d$-cluster tilting subcategories of singularity categories of Iwanaga-Gorenstein rings, see \cite{Iya18}, but Theorem \ref{Main theorem intro} is the first result for non-Iwanaga-Gorenstein rings. For such rings the singularity category is more difficult to control since it is not enhanced by the Gorenstein projective modules. However, it is still possible to compute the subcategory explicitly, which we do in Example \ref{Example 2}. 

To prove Theorem \ref{Main theorem intro}, we use the left triangulated structure of $\underline{\cE}$, see Definition \ref{Left triangulated categories} and Theorem \ref{stable category is left triangulated}. More precisely, we introduce $d$-cluster tilting subcategories of left triangulated categories, and we show that there is a one-to-one correspondence between $d$ and $d\bZ$-cluster tilting subcategories of $\cE$ and $\underline{\cE}$, see Theorem \ref{correspondence cluster tilting exact and stable}. Furthermore, we show that if a left triangulated category has a $d\bZ$-cluster tilting subcategory, then its stabilization has a $d\bZ$-cluster tilting subcategory. We then conclude using the fact that the stabilization of $\underline{\cE}$ is the singularity category, which was proved in \cite{KV87}.

We also obtain a corollary for Gorenstein projective modules, which we state below in the special case where $\cE=\md \Lambda$. Recall that $\Lambda$ is Iwanaga-Gorenstein if it has finite selfinjective dimension, and that in this case the Gorenstein projectives are 
\[
\cG\cP(\md \Lambda) := \{M\in \md \Lambda \mid \Ext^{i}_{\Lambda}(M,\Lambda)=0 \text{ for  }i>0\}.
\]

\begin{Corollary}[Theorem \ref{Cluster tilting inside Gorenstein projectives}]
Assume $\Lambda$ is Iwanaga-Gorenstein, and let $\cM$ be a $d\bZ$-cluster tilting subcategory of $\md \Lambda$. Then 
\[
\cM\cap \cG\cP(\md \Lambda)
\]
is a $d\bZ$-cluster tilting subcategory of $\cG\cP(\md \Lambda)$.
\end{Corollary}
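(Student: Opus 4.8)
The plan is to realise $\cG\cP(\md\text{-}\Lambda)$ as a Frobenius exact category whose stable category is the singularity category, and then to transport the cluster tilting subcategory of Theorem~\ref{Main theorem intro} across this identification. Since $\Lambda$ is Iwanaga-Gorenstein, the category $\cG\cP(\md\text{-}\Lambda)$, with the exact structure inherited from $\md\text{-}\Lambda$, is Frobenius, its projective-injective objects being exactly the projective modules $\cP=\proj\Lambda$. Hence $\underline{\cG\cP(\md\text{-}\Lambda)}$ is triangulated, and by \cite{Buc86,KV87} there is a triangle equivalence $\underline{\cG\cP(\md\text{-}\Lambda)}\cong D_{\operatorname{sing}}(\Lambda)$ under which the suspension is the cosyzygy $\Omega^{-1}$. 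Note that $\cP\subseteq\cM\cap\cG\cP(\md\text{-}\Lambda)$, since cluster tilting subcategories contain the projectives and projectives are Gorenstein projective.

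Applying Theorem~\ref{Main theorem intro} to $\cE=\md\text{-}\Lambda$ produces a $d\bZ$-cluster tilting subcategory $\cU$ of $D_{\operatorname{sing}}(\Lambda)$, which under \eqref{how to write singularity category} consists of the objects isomorphic to $M[di]$ for $M\in\cM$ and $i\in\bZ$. As $\cG\cP(\md\text{-}\Lambda)$ is already Frobenius, its stable category coincides with its own stabilization, so I would apply Proposition~\ref{cluster tilting from stable to exact} to the exact category $\cE=\cG\cP(\md\text{-}\Lambda)$: the preimage
\[
\cN_0:=\{\,G\in\cG\cP(\md\text{-}\Lambda)\mid \underline{G}\in\cU\,\}
\]
of $\cU$ under the canonical functor $\cG\cP(\md\text{-}\Lambda)\to\underline{\cG\cP(\md\text{-}\Lambda)}$ is then a $d\bZ$-cluster tilting subcategory of $\cG\cP(\md\text{-}\Lambda)$. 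The corollary therefore reduces to the identification $\cN_0=\cM\cap\cG\cP(\md\text{-}\Lambda)$.

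The inclusion $\cM\cap\cG\cP(\md\text{-}\Lambda)\subseteq\cN_0$ is formal, since any $G$ in the left-hand side lies in $\cM$ and so maps to $G=G[d\cdot 0]\in\cU$. For the reverse inclusion one must show that a Gorenstein projective $G$ with $\underline{G}\cong M[di]$ for some $M\in\cM$ and $i\in\bZ$ already lies in $\cM$; since $[d]=\Omega^{-d}$ on the stable category, this amounts to proving that $\cM\cap\cG\cP(\md\text{-}\Lambda)$ is closed under the $d$-th syzygy $\Omega^{d}$ and the $d$-th cosyzygy $\Omega^{-d}$ of the Frobenius category. The syzygy closure is clean: for $M\in\cM\cap\cG\cP(\md\text{-}\Lambda)$ and $M'\in\cM$, first-variable dimension shifting gives $\Ext^{i}_\Lambda(\Omega^{d}M,M')\cong\Ext^{i+d}_\Lambda(M,M')$, which vanishes for $0<i<d$ because $d\nmid(i+d)$ and $\cM$ is $d\bZ$-cluster tilting; by the characterisation $\cM=\{X\mid\Ext^{i}_\Lambda(X,\cM)=0,\ 0<i<d\}$ this gives $\Omega^{d}M\in\cM\cap\cG\cP(\md\text{-}\Lambda)$.

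The cosyzygy closure is where I expect the main difficulty, because the coresolutions of the Frobenius structure are built from the projective modules $\cP$, which are not injective in $\md\text{-}\Lambda$, so second-variable dimension shifting of $\Ext_\Lambda$ is unavailable. Instead I would use $\Ext^{i}_\Lambda(G,-)\cong\underline{\Hom}(\Omega^{i}G,-)$ for $G\in\cG\cP(\md\text{-}\Lambda)$ to rewrite the required vanishing $\Ext^{i}_\Lambda(\Omega^{-d}M,M')=0$ $(0<i<d)$ as the vanishing of the negative Tate cohomology $\widehat{\Ext}^{\,i-d}(M,M')$ of objects of $\cM\cap\cG\cP(\md\text{-}\Lambda)$; the $d\bZ$-hypothesis only supplies vanishing of \emph{positive} Tate cohomology in degrees not divisible by $d$, so the crux is to transfer this to negative degrees. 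The natural tool is the Serre duality of $\underline{\cG\cP(\md\text{-}\Lambda)}$, of the form $\widehat{\Ext}^{\,m}(A,B)\cong D\,\widehat{\Ext}^{\,-m}(B,\bS A)$, which converts negative-degree vanishing into positive-degree vanishing for the Serre-twisted arguments, provided $\cM\cap\cG\cP(\md\text{-}\Lambda)$ is stable under the Serre functor $\bS$ — equivalently under the higher Auslander-Reiten translation — which is again a consequence of $\cM$ being $d\bZ$-cluster tilting. Alternatively, and more cleanly, both $\Ext$-conditions defining $\cN_0$ can be read off directly in the stable setting from Proposition~\ref{cluster tilting from stable to exact}, which already encodes them; this is the route I would ultimately take, using the explicit syzygy computation above only to pin down the degree-zero part of the identification.
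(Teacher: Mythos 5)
Your overall skeleton is the same as the paper's: realise $\cG\cP(\md\text{-}\Lambda)$ as a Frobenius exact category, identify $\underline{\cG\cP(\md\text{-}\Lambda)}$ with the singularity category (Lemma \ref{Buchweitz result}), pull the $d\bZ$-cluster tilting subcategory of Theorem \ref{Main theorem intro} back to a $d\bZ$-cluster tilting subcategory $\cN_0\subseteq\cG\cP(\md\text{-}\Lambda)$, and reduce everything to the identification $\cN_0=\cM\cap\cG\cP(\md\text{-}\Lambda)$. You correctly isolate the crux (a Gorenstein projective $G$ with $\Omega^{dk}(\underline{G})\in\underline{\cM}$ must lie in $\cM$), and your syzygy-closure argument is fine. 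But at the crux your argument stops being a proof, and the Serre-duality route you sketch has concrete problems. First, stability of $\cM\cap\cG\cP(\md\text{-}\Lambda)$ under the Serre functor of $\underline{\cG\cP(\md\text{-}\Lambda)}$ is not ``a consequence of $\cM$ being $d\bZ$-cluster tilting'': results of the form $\bS(\cU)=\cU$ apply to subcategories already known to be cluster tilting in the triangulated category, so invoking them for the image of $\cM\cap\cG\cP(\md\text{-}\Lambda)$ presupposes exactly the identification you are trying to prove (applying them to the image of $\cN_0$ is legitimate, but then says nothing about $\cM\cap\cG\cP(\md\text{-}\Lambda)$). Second, Serre duality in $\underline{\cG\cP(\md\text{-}\Lambda)}$ only controls morphisms between Gorenstein projective objects, whereas membership in $\cM$ (Definition \ref{Definition cluster tilting subcategory}) requires $\Ext$-vanishing against \emph{all} $M'\in\cM$, and these $M'$ need not be Gorenstein projective; your Tate-cohomology reformulation therefore does not even capture the full condition. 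Third, your closing sentence --- that both conditions ``can be read off directly'' from Proposition \ref{cluster tilting from stable to exact} --- is a deferral, not an argument: that proposition shows $\cN_0$ is $d\bZ$-cluster tilting, it does not show $\cN_0\subseteq\cM$.

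The missing idea is that the obstruction you name is not actually there: second-variable dimension shifting \emph{is} available, in the upward direction, precisely because the first argument is Gorenstein projective. If $G\in\cG\cP(\md\text{-}\Lambda)$ then $\Ext^{i}_{\Lambda}(G,P)=0$ for all $i>0$ and $P\in\cP$, so applying $\Hom_{\Lambda}(G,-)$ to $0\to\Omega E\to P\to E\to 0$ gives $\Ext^{i}_{\Lambda}(G,E)\cong\Ext^{i+1}_{\Lambda}(G,\Omega E)$, hence $\Ext^{i}_{\Lambda}(G,E)\cong\Ext^{i+j}_{\Lambda}(G,\Omega^{j}E)$ for all $j\geq 0$. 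This is how the paper concludes: with $k$ chosen so that $\Omega^{dk}(\underline{G})\in\underline{\cM}$, and $M'\in\cM$ arbitrary,
\[
\Ext^{i}_{\Lambda}(G,M')\cong\Ext^{i+dk}_{\Lambda}(G,\Omega^{dk}M')\cong\Ext^{i}_{\Lambda}(\Omega^{dk}G,\Omega^{dk}M')=0
\quad\text{for } 1\leq i\leq d-1,
\]
where the second isomorphism is ordinary first-variable shifting, and the vanishing holds because $\Omega^{dk}G\in\cM$ and $\Omega^{dk}M'\in\cM$ (the latter by the $d\bZ$-condition). Since $M'$ was arbitrary, the characterisation of $\cM$ gives $G\in\cM$. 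This one observation replaces your entire Tate/Serre paragraph, requires no Hom-finiteness or Serre functor, and is why the paper's corollary holds for arbitrary exact categories in which every object has finite Gorenstein projective dimension, not just for $\md\text{-}\Lambda$ with $\Lambda$ Iwanaga-Gorenstein.
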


We now describe the structure of the paper. In Section \ref{Section Exact categories}, \ref{Section Left triangulated categories} and \ref{Section Cluster-tilting subcategories} we recall the essential notions and results which we need. In Section \ref{Section Cluster tilting subcategories of left triangulated categories} we introduce cluster tilting subcategories for left triangulated categories. We show that for a left triangulated category $\cC$ with a $d\bZ$-cluster tilting subcategory, the stabilization $\bZ\cC$ has a $d\bZ$-cluster tilting subcategory. We also investigate the $(d+2)$-angulated structure of this subcategory. Our main result in Section \ref{Section d-cluster tilting in stable categories} is Theorem \ref{correspondence cluster tilting exact and stable}, which gives a correspondence between $d$-cluster tilting subcategories of $\cE$ and $\underline{\cE}$ when $\cE$ is an exact category with enough projectives. In Section \ref{Section Gorenstein projectives} we investigate the relationship with Gorenstein projectives. In Section \ref{Section Example} we compute the cluster tilting subcategory of the singularity category in two examples.

\section{Exact categories}\label{Section Exact categories}
Here we define exact categories, following the conventions in \cite{Bue10}. 

\begin{Definition} An \emphbf{exact category} $\cE$ is an additive category equipped with a distinguished class of sequences
\[
0\to E_3\xrightarrow{f} E_2\xrightarrow{g}E_1\to 0
\]
where $f$ is the kernel of $g$ and $g$ is the cokernel of $f$. The morphisms $f$ are called \emphbf{admissible monomorphisms}, and the morphisms $g$ are called \emphbf{admissible epimorphisms}. The following axioms need to be satisfied:
\begin{enumerate}
\item[(E0)] For all object $E$ in $\cE$ the identity morphism $1_E\colon E\to E$ is an admissible monomorphism;
\item[(E0$\op$)]  For all object $E$ in $\cE$ the identity morphism $1_E\colon E\to E$ is an admissible epimorphism;
\item[(E1)] The composite of two admissible monomorphism is an admissible monomorphism;
\item[(E1$\op$)] The composite of two admissible epimorphisms is an admissible epimorphism;
\item[(E2)] The pushout of an admissible monomorphism exists and yields an admissible monomorphisms. In other words, given an admissible monomorphism $f\colon E_0\to E_1$ and a morphism $g\colon E_0\to E_2$ there exists a pushout diagram
\[
\xymatrix{
 E_0\ar[d]^{g}\ar[r]^{f} & E_1\ar[d]^{h}\\
 E_2 \ar[r]^{k} & E_3
}
\]
where $k$ is an admissible monomorphism;
\item[(E2$\op$)] The pullback of an admissible epimorphism exists and yields an admissible epimorphism. In other words, given an admissible epimorphism $f\colon E_1\to E_0$ and a morphism $g\colon E_2\to E_0$ there exists a pullback diagram
\[
\xymatrix{
 E_3\ar[d]^{k}\ar[r]^{h} & E_2\ar[d]^{g}\\
 E_1 \ar[r]^{f} & E_0
}
\]
where $h$ is an admissible epimorphism.
\end{enumerate}
\end{Definition}
A sequence in $\cE$
\[
\cdots \to E_1\to E_0\to \cdots
\]
is called exact if there exist exact sequences $0\to K_i\to E_i\to K_{i-1}\to 0$ such that the maps $E_i\to E_{i-1}$ are equal to the composites $E_i\to K_{i-1}\to E_{i-1}$. If $\cE$ is an exact category, then the opposite category $\cE\op$ becomes an exact category in a natural way. If $\cF\subset \cE$ is a full subcategory of $\cE$ which is closed under extensions, then the class of sequences $0\to F_1\xrightarrow{} F_2\xrightarrow{}F_3\to 0$ in $\cF$  which are exact in $\cE$ makes $\cF$ into an exact category. We say that $\cF$ is an exact subcategory of $\cE$. 

An object $P$ in $\cE$ is projective if for any admissible epimorphism $E_1\to E_0$ the induced map $\cE(P,E_1)\to \cE(P,E_0)$ is an epimorphism. We let $\cP$ denote the subcategory of $\cE$ consisting of the projective objects. The exact category $\cE$ has enough projectives if for any object $E$ in $\cE$ there exists an admissible epimorphism $P\to E$ with $P\in \cP$. In this case we let $\underline{\cE}=\cE/\cP$ denote the stable category of $\cE$ modulo projectives. For any object $E$ or morphism $f$ in $\cE$ we denote the corresponding object or morphism in $\underline{\cE}$ by $\underline{E}$ or $\underline{f}$. Since $\cE$ has enough projectives, $\underline{f}=0$ for a morphism $f\colon E\to E'$ if and only if $f$ factors through an admissible epimorphism $P\to E'$ with $P$ projective. Using this, it follows by the same argument as in the proof of \cite[Theorem 2.2]{Hel60} that for any two objects $E_0$ and $E_1$ in $\cE$ there exists an isomorphism $\underline{E_1}\cong \underline{E_2}$ in $\underline{\cE}$ if and only if there exist projective objects $P,Q\in \cE$ and an isomorphism
\begin{equation}\label{isomorphism in stable cat}
E_0\oplus P\cong E_1\oplus Q
\end{equation}
in $\cE$.

Next we define the syzygy functor:

\begin{Definition}[\cite{Hel60}]\label{syzygy functor}
Let $\cE$ be an exact category with enough projectives. For each object $E\in \cE$ choose an admissible epimorphism $p_E\colon P_E\to E$ with $P_E$ projective. The \emphbf{syzygy functor} $\Omega\colon \underline{\cE}\to \underline{\cE}$ is defined as follows: For an object $E\in \cE$ we set $\Omega \underline{E}= \underline{\Ker p_E}$. For a morphism $f\colon E_0\to E_1$ we set $\Omega(\underline{f})=\underline{h}$ where $g\colon P_{E_0}\to P_{E_1}$ and $h\colon \Ker p_{E_0}\to \Ker p_{E_1}$ are morphisms in $\cE$ making the diagram
\[
\xymatrix{
0\ar[r]& \Ker p_{E_1} \ar[r]^{} & P_{E_1}\ar[r]^{p_{E_1}} &
E_1 \ar[r] & 0  \\
0\ar[r] & \Ker p_{E_0} \ar[r]\ar[u]^{h} & P_{E_0}\ar[r]^{p_{E_0}}\ar[u]^g & E_0\ar[r]\ar[u]^{f} & 0
}
\]
commutative.
\end{Definition} 
Note that $\Omega(\underline{f})$ is independent of the choice of morphisms $g$ and $h$, and up to isomorphism the syzygy functor is independent of the choice of the admissible epimorphisms $p_E$, see Section 3 in \cite{Hel60} for details. By abuse of notation, for $E\in \cE$ and $i\geq 0$ we let $\Omega^iE$ denote a choice of an object in $\cE$ satisfying $\underline{\Omega^iE}\cong \Omega^i\underline{E}$.

An object $I$ in $\cE$ is injective if it is projective in $\cE\op$, and $\cE$ has enough injectives if $\cE\op$ has enough projectives. The exact category $\cE$ is called \emphbf{Frobenius} if $\cE$ has enough projectives and enough injectives, and if the projective and injective objects coincide. In this case, the stable category $\underline{\cE}$ becomes a triangulated category where the suspension functor is the quasi-inverse of $\Omega$. We refer to \cite[Section I.2]{Hap88} for more details.

We end this section with the following lemma, which we need later.

\begin{Lemma}\label{Syzygy functor full and faithful}
Let $\cE$ be an exact category with enough projectives, and let $E$ be an object in $\cE$. The following statements are equivalent:
\begin{enumerate}
\item\label{syzygy full:1} $\Ext^1_{\cE}(E,P)=0$ for all $P\in \cP$;
\item\label{syzygy full:2} The map 
\[
\Hom_{\underline{\cE}}(\underline{E},\underline{E'}) \to \Hom_{\underline{\cE}}(\Omega (\underline{E}), \Omega (\underline{E'})) \quad \underline{f}\mapsto \Omega(\underline{f})
\]
 is an isomorphism for all $E'\in \cE$.
\end{enumerate}
\end{Lemma}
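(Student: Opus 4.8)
The plan is to prove the two implications separately by explicit diagram chases with projective presentations, avoiding any appeal to the (not yet established) left triangulated structure of $\underline{\cE}$. Throughout I fix an admissible epimorphism $p\colon P\to E$ from a projective, giving $0\to \Omega E\xrightarrow{i} P\xrightarrow{p} E\to 0$, and for each test object $E'$ a presentation $0\to \Omega E'\xrightarrow{i'} Q\xrightarrow{q} E'\to 0$. Recall that $\Omega(\underline f)$ is computed by lifting $f\colon E\to E'$ to $g\colon P\to Q$ with $qg=fp$ and taking the induced map $h\colon \Omega E\to \Omega E'$, characterised by $i'h=gi$.

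For (i)$\Rightarrow$(ii) I treat surjectivity and injectivity of the comparison map $\underline f\mapsto \Omega(\underline f)$ in turn. For surjectivity, given $\underline k\colon \Omega E\to \Omega E'$ I consider $i'k\colon \Omega E\to Q$; applying $\Hom_{\cE}(-,Q)$ to the presentation of $E$ shows that the obstruction to extending $i'k$ along $i$ lies in $\Ext^1_{\cE}(E,Q)$, which vanishes by (i). An extension $g\colon P\to Q$ satisfies $qgi=qi'k=0$, hence descends to $f\colon E\to E'$ with $qg=fp$, and by construction $\Omega(\underline f)=\underline k$. For injectivity, suppose $\Omega(\underline f)=0$, so the induced map factors as $h=\gamma\eta$ through a projective $R$. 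Using $\Ext^1_{\cE}(E,R)=0$ I extend $\eta\colon \Omega E\to R$ to $\tilde\eta\colon P\to R$ with $\tilde\eta i=\eta$, and replace $g$ by $g'=g-i'\gamma\tilde\eta$. A short computation gives $g'i=0$, so $g'=g''p$ factors through $p$; since $qi'=0$ this yields $f=qg''$, exhibiting $f$ as factoring through the projective $Q$, i.e. $\underline f=0$.

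For the converse (ii)$\Rightarrow$(i) I show that every class in $\Ext^1_{\cE}(E,P_0)$, with $P_0$ projective, vanishes. Represent such a class by $0\to P_0\xrightarrow{a} E'\xrightarrow{b} E\to 0$. Since $P_0$ is projective, the horseshoe construction identifies $\Omega E'\cong\Omega E$ and shows that $\Omega(\underline b)\colon \Omega\underline{E'}\to \Omega\underline{E}$ is an isomorphism. Applying surjectivity of the comparison map for the test object $E'$ to $(\Omega\underline b)^{-1}$ produces $\underline s\colon \underline E\to \underline{E'}$ with $\Omega(\underline s)=(\Omega\underline b)^{-1}$; functoriality gives $\Omega(\underline b\,\underline s)=1=\Omega(1_{\underline E})$, and injectivity of the comparison map for the test object $E$ forces $\underline b\,\underline s=1_{\underline E}$.

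Finally I upgrade this stable section to an honest one. From $\underline b\,\underline s=1_{\underline E}$ we have $bs-1_E=vu$ with $u\colon E\to R$, $v\colon R\to E$ and $R$ projective; lifting $v$ through the admissible epimorphism $b$ to $v'\colon R\to E'$ and setting $s'=s-v'u$ gives $bs'=1_E$, so the sequence splits and the class is zero. The main obstacle is precisely this converse direction: the key idea is to detect $\Ext^1_{\cE}(E,P_0)$ by feeding the extension itself back as the test object $E'$ and exploiting that $\Omega$ annihilates the projective sub-object $P_0$, which makes $\Omega(\underline b)$ invertible; turning the resulting section in $\underline{\cE}$ back into a genuine splitting in $\cE$ is the remaining delicate point, handled by the projectivity of $R$.
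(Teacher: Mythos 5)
Your proof is correct, and it differs in kind from the paper's, which offers essentially no argument: the paper simply asserts that the proof of \cite[Lemma 7]{KM18} ``also works for exact categories.'' Your write-up supplies precisely the details that citation hides. In the direction (i)$\Rightarrow$(ii) you run the expected obstruction argument: vanishing of $\Ext^1_{\cE}(E,Q)$ and $\Ext^1_{\cE}(E,R)$ against the projectives $Q$ and $R$ lets you extend morphisms out of $\Omega E$ along $i\colon \Omega E\to P$, and a direct chase then gives both surjectivity and injectivity of the comparison map. In the direction (ii)$\Rightarrow$(i) your key move --- represent a class in $\Ext^1_{\cE}(E,P_0)$ by an admissible extension $0\to P_0\to E'\xrightarrow{b} E\to 0$, observe via the horseshoe construction (equivalently, by pulling back $p\colon P\to E$ along $b$) that $\Omega(\underline{b})$ is invertible because the kernel $P_0$ is projective, then use surjectivity of the comparison map at the test object $E'$ to produce a stable section $\underline{s}$ and injectivity at the test object $E$ to force $\underline{b}\,\underline{s}=1_{\underline{E}}$ --- is exactly the right way to feed hypothesis (ii) back into itself, and your final de-stabilization step (correcting $s$ by a lift of the projective correction term through the admissible epimorphism $b$, so that $bs'=1_E$ on the nose) is what is needed, since an admissible epimorphism admitting a section splits. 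Every tool you invoke (long exact $\Ext$-sequences, the horseshoe lemma, the splitting criterion) is valid in exact categories with enough projectives, so your argument in particular substantiates the paper's unproved claim that the cited module-category proof carries over to this setting.
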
 

\begin{proof}
This is well known, see \cite[Proposition 2.43]{AB69}. See also \cite[Lemma 9]{KM19} for a direct proof which can be translated verbatim to an exact category.
\end{proof}

\section{Left triangulated categories}\label{Section Left triangulated categories}
Here we recall the notion of a left triangulated category. This was first considered in  \cite{Kel90}, \cite{KV87} (where it would be called a co-suspended category), and later in \cite{ABM98}, \cite{Bel00}, \cite{BM94}. A higher dimensional version has also been introduced in \cite{Lin17}.

Let $\cC$ be a category and $\Omega\colon \cC\to \cC$ an endofunctor. A sequence of the form 
\[
\Omega C\to A\to B\to C
\]
is called an $\Omega$-sequence. A morphism of $\Omega$-sequences $\Omega C\to A\to B\to C$ and $\Omega C'\to A'\to B'\to C'$ is a commutative diagram
\[
\xymatrix{
\Omega C\ar[d]^{\Omega h}\ar[r]& A \ar[d]^f\ar[r]^{} & B \ar[d]^g\ar[r] & C\ar[d]^h \\
\Omega C'\ar[r]& A' \ar[r]^{} & B'\ar[r] & C .
}
\]
Composition of morphism is given in the canonical way.

\begin{Definition}\label{Left triangulated categories}
A \emphbf{left triangulated category} is an additive category $\cC$ equipped with an additive endofunctor $\Omega\colon \cC\to \cC$ and a class of $\Omega$-sequences called \emphbf{triangles}, which satisfy the following axioms:
\begin{enumerate}
\item[(T0)]\label{Left triangulated categories:0} Any $\Omega$-sequence which is isomorphic to a triangle is a triangle itself;
\item[(T1)]\label{Left triangulated categories:1} For each $C\in \cC$ the $\Omega$-sequence $0\to C\xrightarrow{1}C\to 0$ is a triangle;
\item[(T2)]\label{Left triangulated categories:2} For any morphism $w\colon B\to C$ there exists a triangle $\Omega C\to A\to B\xrightarrow{w} C$
\item[(T3)]\label{Left triangulated categories:3} If $\Omega C\xrightarrow{u} A\xrightarrow{v} B\xrightarrow{w} C$ is a triangle, then $\Omega B\xrightarrow{-\Omega (w)} \Omega C\xrightarrow{u} A\xrightarrow{v} B$ is a triangle;
\item[(T4)]\label{Left triangulated categories:4} Given a diagram
\[
\xymatrix{
\Omega C\ar[d]^{\Omega h}\ar[r]^u & A \ar[r]^{v} & B \ar[d]^g\ar[r]^w & C\ar[d]^h \\
\Omega C'\ar[r]^{u'}& A' \ar[r]^{v'} & B'\ar[r]^{w'} & C
}
\]
where the rows are triangles and the square commutes, then there exists a morphism $f\colon A\to A'$ making the whole diagram commute;
\item[(T5)]\label{Left triangulated categories:5} Given two morphisms $w\colon B\to C$ and $h\colon C\to D$, triangles $\Omega C\xrightarrow{u} A\xrightarrow{v} B\xrightarrow{w} C$ and $\Omega D\xrightarrow{f} E\xrightarrow{g} C\xrightarrow{h} D$, then there exists a commutative diagram
\[
\xymatrix{
& \Omega E \ar[d]^{u\circ \Omega (g)} & & \\
\Omega C\ar[d]^{\Omega(h)}\ar[r]^u & A \ar[r]^{v}\ar[d]^{\alpha} & B \ar[d]^1\ar[r]^w & C\ar[d]^h \\
\Omega D\ar[d]^1\ar[r]^{i}& F \ar[d]^{\beta}\ar[r]^{j} & B\ar[d]^w \ar[r]^{h\circ w} & D \ar[d]^1 \\
\Omega D\ar[r]^{f}& E \ar[r]^{g} & C\ar[r]^{h} & D 
}
\]
where the middle row and the second column are also triangles.
\end{enumerate}
\end{Definition}
If $\Omega C\to A\to B\xrightarrow{} C$ is a triangle and $D\in \cC$, then applying $\Hom_{\cC}(D,-)$ gives a long exact sequence
\begin{multline*}
\cdots \to \Hom_{\cC}(D,\Omega^{i+1} C)\to  \Hom_{\cC}(D,\Omega^i A)\to \Hom_{\cC}(D,\Omega^i B)\xrightarrow{} \cdots  \\
\cdots \to \Hom_{\cC}(D,\Omega C)\to  \Hom_{\cC}(D, A)\to \Hom_{\cC}(D,B) \to \Hom_{\cC}(D,C)
\end{multline*}
see \cite[Corollary 1.5]{ABM98}. In particular, if $0\to A\xrightarrow{v}B\to 0$ is a triangle, then $\Hom_{\cC}(D, A)\xrightarrow{v\circ -} \Hom_{\cC}(D,B)$ is an isomorphism for all $D\in \cC$, so $v$ must be an isomorphism. 

Assume $\cE$ is an exact category with enough projectives. Any diagram of the form
\[
\xymatrix{
0\ar[r]& E_3\ar[r]^{v} & E_2\ar[r]^{w} &
E_1 \ar[r] & 0  \\
0\ar[r] & \Omega E_1 \ar[r]\ar[u]^{u} & P\ar[r]\ar[u] & E_1\ar[r]\ar[u]^{1} & 0
}
\]
induces a sequence $\Omega \underline{E_1}\xrightarrow{\underline{u}} \underline{E_3}\xrightarrow{\underline{v}}\underline{E_2}\xrightarrow{\underline{w}}\underline{E_1}$ in $\underline{\cE}$. The following result follows from \cite[Theorem 3.1]{BM94}.

\begin{Theorem}\label{stable category is left triangulated}
Let $\cE$ be an exact category with enough projectives. Then the stable category $\underline{\cE}$ together with syzygy functor $\Omega\colon \underline{\cE}\to \underline{\cE}$ and the class of all $\Omega$-sequences isomorphic to a sequence $\Omega \underline{E_1}\xrightarrow{\underline{u}} \underline{E_3}\xrightarrow{\underline{v}}\underline{E_2}\xrightarrow{\underline{w}}\underline{E_1}$ as above is a left triangulated category.
\end{Theorem}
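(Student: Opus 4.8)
The plan is to verify the axioms (T0)--(T5) of Definition \ref{Left triangulated categories} directly, following the dual of Happel's construction of the triangulated structure on the stable category of a Frobenius category \cite{Hap88}. The only structural difference is that here $\cE$ has enough projectives but not enough injectives, so the syzygy functor $\Omega$ need not be invertible and one obtains a left triangulated rather than a triangulated category. Before treating the axioms I would record the basic well-definedness fact. Given a conflation $0\to E_3\xrightarrow{v} E_2\xrightarrow{w} E_1\to 0$, a choice of syzygy conflation $0\to \Omega E_1\to P\to E_1\to 0$ with $P$ projective, and a lift of $1_{E_1}$ to a map $P\to E_2$ (which exists since $P$ is projective and $w$ is an admissible epimorphism), one obtains the $\Omega$-sequence $\Omega\underline{E_1}\xrightarrow{\underline u}\underline{E_3}\xrightarrow{\underline v}\underline{E_2}\xrightarrow{\underline w}\underline{E_1}$. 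By the comparison theorem for projective presentations, any two such presentations differ by projective summands and any two lifts differ by a morphism factoring through a projective, hence agree in $\underline{\cE}$; so this sequence is independent, up to isomorphism of $\Omega$-sequences, of the auxiliary choices. Axiom (T0) then holds by fiat, since triangles are defined as all $\Omega$-sequences isomorphic to a standard one, and (T1) follows by applying the construction to the conflation with $E_3=E_2=C$, $v=1_C$, and $E_1=0$.

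The existence axiom (T2) is where the construction becomes visible. Given a morphism $w\colon B\to C$ in $\cE$ representing $\underline{w}$, I would choose a syzygy conflation $0\to \Omega C\to P\xrightarrow{p} C\to 0$. The map $B\oplus P\to C$ with components $w$ and $p$ is an admissible epimorphism, because it equals the composite of the admissible epimorphism $1_B\oplus p\colon B\oplus P\to B\oplus C$ (a direct sum of conflations) with the split epimorphism $B\oplus C\to C$ having components $w$ and $1_C$. Taking its kernel $A$ gives a conflation $0\to A\to B\oplus P\to C\to 0$, whose associated standard triangle is $\Omega\underline{C}\to \underline{A}\to \underline{B\oplus P}\to \underline{C}$. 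Since $\underline{P}=0$ we have $\underline{B\oplus P}\cong \underline{B}$, and under this isomorphism the last map becomes $\underline{w}$; thus $\Omega\underline{C}\to \underline{A}\to \underline{B}\xrightarrow{\underline{w}}\underline{C}$ is a triangle ending in $\underline w$, as required.

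For the rotation axiom (T3) I would start from the standard triangle of a conflation $0\to E_3\xrightarrow v E_2\xrightarrow w E_1\to 0$ and show that the rotated sequence $\Omega\underline{E_2}\xrightarrow{-\Omega\underline w}\Omega\underline{E_1}\xrightarrow{\underline u}\underline{E_3}\xrightarrow{\underline v}\underline{E_2}$ is isomorphic, as an $\Omega$-sequence, to the standard triangle attached to a suitable conflation ending in $E_2$; the comparison isomorphism is manufactured by lifting along the chosen projective presentations and is invertible in $\underline{\cE}$ by the uniqueness recorded in the first paragraph, while the sign $-\Omega\underline w$ emerges exactly as in the classical computation. For the fill-in axiom (T4), given two triangles realized by conflations and a commutative square in $\underline{\cE}$ with components $g$ and $h$, I would lift $g$ and $h$ to $\cE$, use the defining lifting property of the syzygy presentations together with the projectivity of the middle terms to produce a map $f\colon A\to A'$, and then check that $\underline f$ makes the whole diagram of $\Omega$-sequences commute, using that morphisms factoring through projectives vanish in $\underline{\cE}$.

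The main obstacle is the octahedral-type axiom (T5), which forces one to construct the object $F$ together with all of its structure maps. Starting from the two triangles coming from conflations with common term $C$, I would build $F$ via the exact-category analogue of the octahedron: form the relevant pullback using (E2$\op$) and pushout using (E2) of the underlying conflations, and assemble a $3\times 3$ diagram of conflations from which one reads off that the middle row and the second column are themselves conflations, hence induce triangles. The remaining work is to verify that every square of the displayed diagram commutes in $\underline{\cE}$ and that the induced maps are the prescribed ones, in particular $u\circ\Omega(g)$ and $h\circ w$. This simultaneous bookkeeping is the technical heart of the argument and is precisely the content of \cite[Theorem 3.1]{BM94}; I would either reproduce that diagram chase in the present exact-category language or invoke it directly.
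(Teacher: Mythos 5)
Your proposal is correct, but it does strictly more work than the paper, which offers no proof of its own: the paper deduces the theorem in one line from \cite[Theorem 3.1]{BM94}, where Beligiannis--Marmaridis establish, in the greater generality of an additive category modulo a contravariantly finite subcategory (here the projectives $\cP$, which are contravariantly finite exactly because $\cE$ has enough projectives), that the quotient carries a left triangulated structure with precisely these triangles. You instead verify the axioms (T0)--(T5) directly by dualizing Happel's construction for Frobenius categories \cite{Hap88}, which is the same underlying mathematics as in \cite{BM94} specialized to $\cP$. Your well-definedness paragraph and your treatment of (T0)--(T2) are complete and correct; in particular, in (T2) the factorization of $(w,p)\colon B\oplus P\to C$ as a composite of admissible epimorphisms and the identification $\underline{B\oplus P}\cong\underline{B}$ (using that the difference $(0,p)$ factors through the projective $P$) are exactly the points that need checking. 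Your sketches of (T3) and (T4) follow the standard pattern, and for the octahedral-type axiom (T5) you candidly offer either to reproduce the diagram chase or to invoke \cite[Theorem 3.1]{BM94}; the latter option is precisely what the paper does, for the whole theorem rather than just this axiom. What the citation buys the paper is brevity and access to the more general statement; what your direct construction buys is a self-contained argument that makes the standard triangles concrete, which is how the paper actually uses them later, for instance in the proof of Proposition \ref{cluster tilting from exact to stable} and in the description of the standard $(d+2)$-angles.
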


Now assume $\cC$ is a category and $\Omega\colon \cC\to \cC$ is an endofunctor. Let $\bZ\cC$ be the stabilization of $\cE$ \cite{Hel68}. Explicitly, the objects of $\bZ\cC$ are pairs $(C,n)$ where $C$ is an object in $\cC$ and $n\in \bZ$ is an integer. The morphism space between two objects $(C,m)$ and $(C',n)$ is given by
\[
\Hom_{\bZ\cC}((C,m),(C',n)):= \colim_{k}\Hom_{\cC}(\Omega^{m+k}(C),\Omega^{n+k}(C'))
\]
where the colimit is taken over all $k\in \bZ$ such that $m+k\geq 0$, and $n+k\geq 0$. Since this is a filtered colimit, it follows that any morphism $(C,m)\to (C',n)$ has a representative $\Omega^{m+k}(C)\to \Omega^{n+k}(C')$ in $\cC$ for some $k$. Composition $(C,m)\to (C',n)\to (C'',p)$ in $\bZ\cC$ is given by composing representatives $\Omega^{m+k}(C)\to \Omega^{n+k}(C')\to \Omega^{p+k}(C'')$ in $\cC$. The category $\bZ\cC$ comes equipped with mutual inverse automorphisms
\begin{equation}\label{suspension and loop}
\Sigma\colon \bZ\cC\to \bZ\cC \quad \text{and} \quad \Omega\colon \bZ\cC\to \bZ\cC 
\end{equation}
given by $\Sigma(C,n)=(C,n-1)$ and $\Omega(C,n)=(C,n+1)$ on objects and by the canonical identifications
\begin{align*}
\colim_{k}\Hom_{\cC}(\Omega^{m+k}C,\Omega^{n+k}C')\cong \colim_{k}\Hom_{\cC}(\Omega^{m-1+k}C,\Omega^{n-1+k}C') \\
\colim_{k}\Hom_{\cC}(\Omega^{m+k}C,\Omega^{n+k}C')\cong \colim_{k}\Hom_{\cC}(\Omega^{m+1+k}C,\Omega^{n+1+k}C')
\end{align*}
to get maps
\begin{align*}
\Hom_{\bZ\cC}((C,m),(C',n))\xrightarrow{\cong} \Hom_{\bZ\cC}((C,m-1),(C',n-1)) \quad f\mapsto \Sigma(f) \\
\Hom_{\bZ\cC}((C,m),(C',n))\xrightarrow{\cong} \Hom_{\bZ\cC}((C,m+1),(C',n+1)) \quad f\mapsto \Omega(f)
\end{align*}
on the morphisms spaces.

  If $\cC$ is a left triangulated category, then $\bZ\cC$ also comes equipped with a class of sequences 
\[
\Omega (C_1,n_1)\to (C_3,n_3)\to (C_2,n_2)\to (C_1,n_1)
\]
called \emphbf{standard triangles}, which are induced from a sequence 
\[
\Omega^{k+n_1+1}(C_1)\xrightarrow{u}\Omega^{k+n_3}(C_3)\xrightarrow{v} \Omega^{k+n_2}(C_2)\xrightarrow{w} \Omega^{k+n_1}(C_1)
\]
in $\cC$ for some integer $k\in \bZ$, and where 
\[
\Omega^{k+n_1+1}(C_1)\xrightarrow{(-1)^{k}u}\Omega^{k+n_3}(C_3)\xrightarrow{(-1)^k v} \Omega^{k+n_2}(C_2)\xrightarrow{(-1)^k w} \Omega^{k+n_1}(C_1)
\]
is a triangle in $\cC$.

\begin{Theorem}[\cite{KV87}]\label{equivalence of singularity category}
The following hold:
\begin{enumerate}
\item\label{equivalence of singularity category:1} If $\cC$ is a left triangulated category, then $\bZ\cC$ becomes a triangulated category with suspension functor $\Sigma$ and with triangles being the standard triangles given above.
\item\label{equivalence of singularity category:2} Let $\cE$ be an exact category with enough projectives $\cP$. Then there exists an equivalence of triangulated categories
\[
\bZ\underline{\cE}\xrightarrow{\cong} K^{-,b}(\cP)/K^b(\cP)
\]
sending an object $(\underline{E},n)$ to a complex $P^{\bullet}[-n]$, where $P^{\bullet}$ is a projective resolution of $E$ concentrated in degrees $\leq 0$.
\end{enumerate}
\end{Theorem}
Here $K^b(\cP)$ and $K^{-,b}(\cP)$ denote the bounded homotopy category with components in $\cP$ and the right bounded homotopy category with bounded homology and with components in $\cP$, respectively. The Verdier quotient 
\[
D_{\operatorname{sing}}(\cE):=K^{-,b}(\cP)/K^b(\cP)
\]
is the singularity category of $\cE$. We refer to Section 4 in \cite{Kra10} for details on Verdier quotients and localization of triangulated categories.

\section{Cluster tilting subcategories}\label{Section Cluster-tilting subcategories}
Let $\cE$ be an additive category and $\cM$ a full additive subcategory of $\cE$. We recall the following notions.
\begin{enumerate}
\item A morphism $M\to E$ in $\cE$ with $M\in \cM$ is called a \emphbf{right} $\cM$\emphbf{-approximation} if the induced morphism $\cE(M',M)\to \cE(M',E)$ is an epimorphism for all $M'\in \cM$; 
\item A morphism $E\to M$ in $\cE$ with $M\in \cM$ is called a \emphbf{left} $\cM$\emphbf{-approximation} if the induced morphism $\cE(M,M')\to \cE(E,M')$ is an epimorphism for all $M'\in \cM$;
\item $\cM$ is \emphbf{contravariantly finite} in $\cE$ if for all objects $E\in \cE$ there exists a right $\cM$-approximation $M\to E$;
\item $\cM$ is \emphbf{covariantly finite} in $\cE$ if for all objects $E\in \cE$ there exists a left $\cM$-approximation $E\to M$;
\item $\cM$ is \emphbf{functorially finite} in $\cE$ if it is contravariantly finite and covariantly finite in $\cE$.
\end{enumerate}
We recall the definition of $d$ and $d\bZ$-cluster tilting subcategories in the following. If $\cE$ is triangulated with suspension functor $\Sigma$, then by $\Ext^i_{\cE}(E,E')$ we mean the Hom-space $\Hom_{\cE}(E,\Sigma^i(E'))$.

\begin{Definition}\label{Definition cluster tilting subcategory}
Let $\cE$ be an exact or a triangulated category, let $\cM$ be a full subcategory of $\cE$, and let $d>0$ be a positive integer. We say that $\cM$ is $d$\emphbf{-cluster tilting} in $\cE$ if the following hold:
\begin{enumerate}
\item\label{Definition cluster tilting subcategory:1} $\cM$ is functorially finite in $\cE$;
\item\label{Definition cluster tilting subcategory:2} We have
\begin{align*}
\cM & = \{E\in \cE \mid \Ext^i_{\cE}(E,\cM)=0 \text{ for }1\leq i\leq d-1\} \\ 
& = \{E\in \cE \mid \Ext^i_{\cE}(\cM,E)=0 \text{ for }1\leq i\leq d-1\};
\end{align*}
\item\label{Definition cluster tilting subcategory:3} If $\cE$ is exact then $\cM$ is a generating and cogenerating subcategory of $\cE$, i.e. for any object $E\in \cE$ there exists an admissible monomorphism $E\to M$ with $M\in \cM$ and an admissible epimorphism $M'\to E$ with $M'\in \cM$.
\end{enumerate}
If furthermore $\Ext^i_{\cE}(\cM,\cM)=0$ for $i\notin d\bZ$, then we say that $\cM$ is $d\bZ$\emphbf{-cluster tilting} in $\cE$.
\end{Definition}

\begin{Remark}\label{on dZ-cluster tilting}
We have the following:
\begin{enumerate}
\item If $\cE$ is exact with enough projectives and injectives, then condition \ref{Definition cluster tilting subcategory:3} follows from conditions \ref{Definition cluster tilting subcategory:1} and \ref{Definition cluster tilting subcategory:2}. Therefore, \ref{Definition cluster tilting subcategory:3} is often not assumed.
\item\label{on dZ-cluster tilting:2} If $\cE$ is an exact category with enough projectives and $\cM$ is $d$-cluster tilting in $\cE$, then $\cM$ is $d\bZ$-cluster tilting in $\cE$ if and only if $\Omega^d(\underline{\cM})\subset \underline{\cM}$.
\item\label{on dZ-cluster tilting:3} If $\cE$ is triangulated with suspension functor $\Sigma$ and $\cM$ is $d$-cluster tilting in $\cE$, then the following conditions are equivalent:
\begin{enumerate}
\item $\cM$ is $d\bZ$-cluster tilting in $\cE$;
\item $\Sigma^d(\cM)\subset \cM$;
\item $\Sigma^{-d}(\cM)\subset \cM$.
\end{enumerate}
\end{enumerate}
\end{Remark}
We need following result later:

\begin{Proposition}[Proposition 2.2.2 in \cite{Iya07a}]\label{characterization of cluster tilting}
Let $\cE$ be an exact category and $\cM$ a full subcategory of $\cE$. Assume $\cM$ is closed under direct summands and satisfies $\Ext^i_{\cE}(\cM,\cM)=0$ for $1\leq i\leq d-1$. Assume furthermore that condition \ref{Definition cluster tilting subcategory:1} and \ref{Definition cluster tilting subcategory:3} in Definition \ref{Definition cluster tilting subcategory} holds for $\cM$. Then the following are equivalent for each $0\leq n\leq d-1$:
\begin{itemize}
\item[$(a)$]$\cM$ is a $d$-cluster tilting subcategory of $\cE$; 
\item[$(b_n)$] If $E\in \cE$ satisfies $\Ext^i_{\cE}(\cM,E)=0$ for $1\leq i\leq n$, then there exists an exact sequence $0\to M_{d-n}\to \cdots \to M_1\to E\to 0$ with $M_j\in \cM$ for $1\leq j\leq d-n$;
\item[$(c_n)$] If $E\in \cE$ satisfies $\Ext^i_{\cE}(E,\cM)=0$ for $1\leq i\leq n$, then there exists an exact sequence $0\to E\to M^1\to \cdots \to M^{d-n}\to 0$ with $M^j\in \cM$ for $1\leq j\leq d-n$.
\end{itemize}
\end{Proposition}

\begin{proof}
For any $E\in \cE$ choose a right $\cM$-approximation $f\colon M\to E$ and an admissible epimorphism $g\colon M'\to E$ with $M'\in \cM$. Then the morphism $\begin{bmatrix}f & g\end{bmatrix}\colon M\oplus M'\to E$ is a right $\cM$-approximation and an admissible epimorphism. Similarly, one can construct a left $\cM$-approximation which is an admissible monomorphism for any $E\in \cE$.  Using this, the proof of Proposition 2.2.2 in \cite{Iya07a} goes through in exactly the same way for exact categories, so the claim holds. 
\end{proof}

The following result shows that condition $(b_0)+(c_0)$ is equivalent to $d$-cluster tilting without the assumption that $\cM$ is functorially finite.

\begin{Proposition}\label{characterization of cluster tilting with functorially finite}
Let $\cE$ be an exact category and $\cM$ a full subcategory of $\cE$. Assume $\cM$ is closed under direct summands and satisfies $\Ext^i_{\cE}(\cM,\cM)=0$ for $1\leq i\leq d-1$. Then the following are equivalent:
\begin{enumerate}
\item\label{characterization of cluster tilting with functorially finite:1} $\cM$ is a $d$-cluster tilting subcategory of $\cE$; 
\item\label{characterization of cluster tilting with functorially finite:2} For each $E\in \cE$ there exists exact sequences 
\begin{align*}
& 0\to M_{d}\to \cdots \to M_1\to E\to 0 \\
& 0\to E\to M^1\to \cdots \to M^{d}\to 0
\end{align*} 
with $M^i\in \cM$ and $M_i\in \cM$ for $1\leq i\leq d$.
\end{enumerate}
\end{Proposition}

\begin{proof}
The implication \ref{characterization of cluster tilting with functorially finite:1}$\implies$\ref{characterization of cluster tilting with functorially finite:2} follows from the implications $(a){\implies}(b_0)$ and $(a){\implies}(c_0)$ in Proposition \ref{characterization of cluster tilting}. For the converse, let $E\in \cE$, and choose an exact sequence
\[
0\to E\xrightarrow{f^1}M^1\xrightarrow{f^2}\cdots \xrightarrow{f^d}M^d\to 0
\]
with $M^i\in \cM$ for $1\leq i\leq d$. Let $M\in \cM$ be arbitrary. Applying $\Hom_{\cE}(-,M)$ to the short exact sequence
\[
0\to \Coker f^i\to M^{i+1}\to \Coker f^{i+1}\to 0
\]
we get that 
\[
\Ext^{j+1}_{\cE}(\Coker f^{i+1},M)\cong \Ext^{j}_{\cE}(\Coker f^{i},M)
\]
for $1\leq j\leq d-2$. Hence, it follows that
\[
0=\Ext^{d-1}_{\cE}(M^d,M)\cong \Ext^{d-2}_{\cE}(\Coker f^{d-2},M)\cong \cdots \cong \Ext^1_{\cE}(\Coker f^1,M).
\]
Therefore, the map
\[
\Hom_{\cE}(M^1,M)\to \Hom_{\cE}(E,M)
\]
is an epimorphism. Since $M\in \cM$ was arbitrary, we get that $f^1\colon E\to M^1$ is a left $\cM$-approximation. Hence, $\cM$ is covariantly finite. Furthermore, if we assume $\Ext^i_{\cE}(M',E)=0$ for $1\leq i\leq d-1$ and $M'\in \cM$, then the same argument as above with $M$ replaced by $E$ shows that the map 
\[
\Hom_{\cE}(M^1,E)\to \Hom_{\cE}(E,E)
\]
is an epimorphism. Therefore, $f^1\colon E\to M^1$ is a split monomorphism. Since $f^1$ is also an admissible monomorphism, it follows that $E$ is a summand of $M^1$. Hence, we get that $E\in \cM$. Since  $\Ext^i_{\cE}(\cM,\cM)=0$ for $1\leq i\leq d-1$, this shows the equality 
\[
\cM=\{E\in \cE\mid \Ext^i_{\cE}(\cM,E)=0 \text{ for }1\leq i\leq d-1 \}.
\] 
 The fact that $\cM$ is contravariantly finite and the equality 
\[
\cM=\{E\in \cE\mid \Ext^i_{\cE}(E,\cM)=0 \text{ for }1\leq i\leq d-1 \}
\] 
follows by a dual argument. Finally, $\cM$ being generating and cogenerating follows from the existence of the exact sequences in part \ref{characterization of cluster tilting with functorially finite:2} of the proposition. 
\end{proof}

\section{Cluster tilting subcategories of left triangulated categories}\label{Section Cluster tilting subcategories of left triangulated categories}

In this section we define cluster tilting subcategories of left triangulated categories. We show that when the ambient category is triangulated, then this coincides with the classical definition. Finally, we show that if a left triangulated category $\cC$ has a $d\bZ$-cluster tilting subcategory, then so does the stabilization $\mathbb{Z}\cC$.

Let $\cC$ be a left triangulated category. We call a sequence in $\cC$
\[
\Omega^d(C_1)\xrightarrow{\alpha_{d+2}}C_{d+2}\xrightarrow{\alpha_{d+1}} C_{d+1}\xrightarrow{\alpha_{d}} \cdots \xrightarrow{\alpha_{2}} C_2\xrightarrow{\alpha_{1}} C_1
\]
a $(d+2)$\emphbf{-angle} if there exists a diagram
\begin{align*}
\xymatrix@!=.5pc{ & C_{d+1} \ar[rr]^{\alpha_{d}} \ar[rd] && C_{d} \ar[rd] && \cdots && C_{2} \ar[rd]^{\alpha_{1}} \\ C_{d+2} \ar[ru]^{\alpha_{d+1}} \ar@{<-{|-}}[rr] && C_{d.5} \ar[ru] \ar@{<-{|-}} [rr] && C_{d-1.5} & \cdots & C_{2.5} \ar[ru] \ar@{<-{|-}}[rr] && C_{1}} 
\end{align*}
where an arrow $\xymatrix@!=.5pc{ C'\ar@{<-{|-}}[r] & C}$ denotes a morphism $C'\leftarrow \Omega(C)$ in $\cC$, each oriented triangle is a triangle in $\cC$, each non-oriented triangle commute, and $\alpha_{d+2}$ is equal to the composite
\[
\Omega^d(C_{1})\to \Omega^{d-1}(C_{2.5})\to \cdots \to \Omega(C_{d.5})\to C_{d+2}.
\]
Note that here the symbol $n.5$ means $n+0.5$. This definition of $(d+2)$-angle differs slightly from \cite{IY08}, where they do not include the morphism $\Omega^d(C_1)\to C_{d+2}$ in the definition.

\begin{Definition}\label{d-cluster tilting in left triangulated}
Let $\cC$ be a left triangulated category and $d>0$ a positive integer. A full additive subcategory $\cX$ of $\cC$ is $d$\emphbf{-cluster tilting} if it satisfies the following:
\begin{enumerate}
\item\label{d-cluster tilting in left triangulated:1} $\cX$ is closed under direct summands in $\cC$;
\item\label{d-cluster tilting in left triangulated:2} For all objects $C$ in $\cC$ there exist $(d+2)$-angles
\[
0\to C\to X^1\to \cdots \to X^d\to 0
\]
and
\[
0\to X_d\to \cdots \to X_1\to C\to 0
\]
with $X_i,X^i\in \cX$ for $1\leq i\leq d$;
\item\label{d-cluster tilting in left triangulated:3} For $C$ in $\cC$ and $X$ in $\cX$  the map
\[
\Hom_{\cC}(\Omega^{i-1}(X),C)\to \Hom_{\cC}(\Omega^{i}(X),\Omega(C)) \quad f\mapsto \Omega(f)
\]
is an isomorphism for $1\leq i\leq d-1$;
\item\label{d-cluster tilting in left triangulated:4} If $X$ and $X'$ are in $\cX$, then
\[
\Hom_{\cC}(\Omega^i(X'),X)=0
\]
for $1\leq i\leq d-1$.
\end{enumerate}
If furthermore $\Omega^d(\cX)\subset \cX$, then we say that $\cX$ is $d\bZ$\emphbf{-cluster tilting} in $\cC$.
\end{Definition}
We use the terminology $d$-cluster tilting since for an exact category $\cE$ with enough projectives we then get a correspondence between $d$-cluster tilting subcategories of $\cE$ and $\underline{\cE}$, see Theorem \ref{correspondence cluster tilting exact and stable}.

\begin{Remark}
We see that condition \ref{d-cluster tilting in left triangulated:2} in Definition \ref{d-cluster tilting in left triangulated} is similar to condition \ref{characterization of cluster tilting with functorially finite:2} in Proposition \ref{characterization of cluster tilting with functorially finite}, and it can be considered as a substitute of  Definition \ref{Definition cluster tilting subcategory} \ref{Definition cluster tilting subcategory:2} and of functorially finiteness. The fact that it behaves much better under stabilization is also a crucial property we need.
\end{Remark}

Next we show that Definition \ref{d-cluster tilting in left triangulated} and Definition \ref{Definition cluster tilting subcategory} are equivalent when $\Omega$ is an automorphism, i.e. when $\cC$ is triangulated. In the following we let $\Sigma$ denote the quasi-inverse of $\Omega$.

\begin{Proposition}\label{equivalence of definitions}
A subcategory of a triangulated category $\cC$ is $d$ or $d\bZ$-cluster tilting in the sense of Definition \ref{d-cluster tilting in left triangulated} if and only if it is $d$ or $d\bZ$-cluster tilting in the sense of Definition \ref{Definition cluster tilting subcategory}.
\end{Proposition}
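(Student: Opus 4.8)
The plan is to compare the two definitions clause by clause under the standing hypothesis that $\Omega$ is an automorphism with inverse $\Sigma$, reducing the substantive content to the established theory of cluster tilting in triangulated categories. The first thing I would record are the dictionary identities that let me pass freely between $\Omega$-language and $\Sigma$-language. Since $\Omega$ is an equivalence it induces isomorphisms on all Hom-sets, so the map in condition \ref{d-cluster tilting in left triangulated:3} of Definition \ref{d-cluster tilting in left triangulated} is automatically an isomorphism and that condition carries no information. Using $\Sigma=\Omega^{-1}$ I also get natural isomorphisms $\Hom_{\cC}(\Omega^i(X'),X)\cong\Hom_{\cC}(X',\Sigma^i(X))=\Ext^i_{\cC}(X',X)$, so condition \ref{d-cluster tilting in left triangulated:4} is exactly the rigidity statement $\Ext^i_{\cC}(\cX,\cX)=0$ for $1\le i\le d-1$. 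Finally, a triangle $\Omega C\to A\to B\to C$ in the left triangulated sense becomes, after applying $\Sigma$ and rotating, an ordinary triangle $A\to B\to C\to\Sigma A$; hence a $(d+2)$-angle in $\cC$ is precisely a splicing of $d$ ordinary triangles, and the two $(d+2)$-angles in condition \ref{d-cluster tilting in left triangulated:2} are a left and a right $\cX$-resolution of $C$ of length $d$ in the usual sense of \cite{IY08}.

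For the implication from Definition \ref{d-cluster tilting in left triangulated} to Definition \ref{Definition cluster tilting subcategory}, rigidity is condition \ref{d-cluster tilting in left triangulated:4} as translated above, and functorial finiteness is read off from the two $(d+2)$-angles: applying $\Hom_{\cC}(-,X)$ and $\Hom_{\cC}(X,-)$ to the constituent triangles and using rigidity shows that the outer morphisms $C\to X^1$ and $X_1\to C$ are a left and a right $\cX$-approximation respectively. For the two equalities in condition (iii) of Definition \ref{Definition cluster tilting subcategory}, the inclusions of $\cX$ into the two orthogonal classes are again rigidity, while the reverse inclusions follow by dimension shifting along the resolution and coresolution together with closure under direct summands; this is the standard argument of \cite{IY08}, which I would cite rather than reproduce.

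For the converse, assume Definition \ref{Definition cluster tilting subcategory}. Closure under direct summands (condition \ref{d-cluster tilting in left triangulated:1}) holds because the orthogonal classes appearing in condition (iii) are closed under summands, rigidity (condition \ref{d-cluster tilting in left triangulated:4}) is immediate from those same equalities, and condition \ref{d-cluster tilting in left triangulated:3} is automatic. The remaining point, the existence of the $(d+2)$-angles of condition \ref{d-cluster tilting in left triangulated:2}, is the main obstacle and the only genuinely nontrivial step: one builds an iterated sequence of right $\cX$-approximations (available by contravariant finiteness) whose successive cones remain in $\cX$, and splices the resulting triangles into a $(d+2)$-angle, and dually for the coresolution. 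That the cones stay inside $\cX$ is exactly where the full force of the orthogonality characterization and rigidity is used; since this is the construction of \cite{IY08}, I would invoke it directly.

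It remains to match the two $d\bZ$ refinements. On the left triangulated side the extra hypothesis is $\Omega^d(\cX)\subset\cX$, and on the triangulated side it is $\Ext^i_{\cC}(\cX,\cX)=0$ for $i\notin d\bZ$. Since $\Omega^d=\Sigma^{-d}$ and $\cX$ is already $d$-cluster tilting by the equivalence just established, Remark \ref{on dZ-cluster tilting}\ref{on dZ-cluster tilting:3} identifies $\Sigma^{-d}(\cX)\subset\cX$ with being $d\bZ$-cluster tilting in the sense of Definition \ref{Definition cluster tilting subcategory}, which is precisely the required compatibility and completes the argument.
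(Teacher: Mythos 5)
Your proposal is correct and takes essentially the same route as the paper's own proof: both reduce the $d\bZ$ statement to the $d$ statement via Remark \ref{on dZ-cluster tilting}\ref{on dZ-cluster tilting:3}, both observe that conditions \ref{d-cluster tilting in left triangulated:1}, \ref{d-cluster tilting in left triangulated:3} and \ref{d-cluster tilting in left triangulated:4} of Definition \ref{d-cluster tilting in left triangulated} translate directly under $\Sigma=\Omega^{-1}$, and both settle condition \ref{d-cluster tilting in left triangulated:2} together with the orthogonality equalities by iterated approximation triangles plus a dimension-shifting/splitting argument, the only difference being that the paper writes out these two computations in full where you cite \cite{IY08}. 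One phrase should be corrected: in the iterated-approximation construction the successive cones do \emph{not} ``remain in $\cX$'' --- the intermediate cones generally lie outside $\cX$, and the content of the orthogonality hypothesis is precisely that the final cone (after $d-1$ steps) lands in $\cX$, which is what the cited construction and the paper's explicit Hom-vanishing chain both establish.
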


\begin{proof}
By Remark \ref{on dZ-cluster tilting} \ref{on dZ-cluster tilting:3} the claim for $d\bZ$-cluster tilting subcategories follows immediately from the claim for $d$-cluster tilting subcategories. Hence, we only prove the latter. Note that a $d$-cluster tilting subcategory in the sense of Definition \ref{Definition cluster tilting subcategory} obviously satisfies \ref{d-cluster tilting in left triangulated:1}, \ref{d-cluster tilting in left triangulated:3} and  \ref{d-cluster tilting in left triangulated:4} in Definition \ref{d-cluster tilting in left triangulated}, while axiom \ref{d-cluster tilting in left triangulated:2} follows from \cite[Corollary 3.3]{IY08} and its dual. For the converse, assume $\cX\subset \cC$ is $d$-cluster tilting in the sense of Definition \ref{d-cluster tilting in left triangulated}. As usual, for subcategories $\cY'$ and $\cY$ of $\cC$, we denote by $\cY'*\cY$ the subcategory consisting of all $C\in \cC$ admitting a triangle $ Y'\to C\to Y\to \Sigma Y'$ with $Y'\in \cY'$ and $Y\in \cY$. By Definition \ref{d-cluster tilting in left triangulated} \ref{d-cluster tilting in left triangulated:2} we have $\cC=\Omega^{d-1}\cX*\cdots*\Omega\cX*\cX$. For each $C\in \cC$, take a triangle 
\[
Y\xrightarrow{f} C\xrightarrow{g} X\to \Sigma Y 
\]
 with $Y\in \Omega^{d-1}\cX*\cdots*\Omega\cX$ and $X\in \cX$. Since $\Hom_{\cC}(Y,\cX)=0$ by Definition \ref{d-cluster tilting in left triangulated} \ref{d-cluster tilting in left triangulated:4}, $g$ is a left $\cX$-approximation and hence $\cX$ is covariantly finite. Moreover, if $\Hom_{\cC}(\Omega^iX,C)=0$ for $1\leq i\leq d-1$, then $f=0$ and hence $C\in \cX$. Since $\Hom_{\cC}(\cX,\Sigma^iX)=0$ for $1\leq i\leq d-1$ and $X\in \cX$ by Definition \ref{d-cluster tilting in left triangulated} \ref{d-cluster tilting in left triangulated:4}, this shows that
\[
\cX = \{C\in \cC\mid \Hom_{\cC}(\cX,\Sigma^iC)=0 \text{ for }1\leq i\leq d-1\}.
\]
 The fact that $\cX$ is contravariantly finite and the equality
\[
\cX = \{C\in \cC\mid \Hom_{\cC}(C,\Sigma^i(\cX))=0 \text{ for }1\leq i\leq d-1\}
\]
is shown dually.
\end{proof}
Recall that the stabilization $\bZ\cC$ of a left triangulated category $\cC$ is a triangulated category, see Theorem \ref{equivalence of singularity category}.

\begin{Definition}
Let $\cC$ be a left triangulated category and $\cX$ a $d\bZ$-cluster tilting subcategory of $\cC$. Define $d\bZ\cX$ to be the full subcategory of $\bZ\cC$ consisting of all objects isomorphic to objects of the form $(X,dk)$ with $X\in \cX$ and $k\in \bZ$.
\end{Definition}

 Our goal is to show that $d\bZ\cX$ is $d\bZ$-cluster tilting in $\bZ\cC$.

\begin{Lemma}\label{closed under direct summands}
The subcategory $d\bZ\cX$ is closed under direct summands. 
\end{Lemma}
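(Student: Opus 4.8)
The plan is to unwind the definitions of $\bZ\cC$ as a colimit category and reduce the statement to closure under direct summands \emph{inside} $\cC$, which holds by hypothesis. Since $d\bZ\cX$ is closed under isomorphism by definition, I may take an object $M = (X,dk)$ with $X\in\cX$ together with a splitting $M\cong Y\oplus Z$ in $\bZ\cC$, and aim to show $Y\in d\bZ\cX$. Writing $Y=(Y_0,a)$ for some $Y_0\in\cC$ and $a\in\bZ$, let $i\colon Y\to M$ and $p\colon M\to Y$ be the structure maps, so that $p\circ i = 1_Y$ and $e:=i\circ p$ is an idempotent on $M$. The essential point is that $i$, $p$ and $1_Y$ are a priori only classes in filtered colimits of $\Hom$-groups of $\cC$, so I cannot split $e$ directly; instead I will realize the retract honestly at a finite stage of the colimit chosen to be divisible by $d$.

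Concretely, by the definition of morphisms in $\bZ\cC$ we have
\[
i\in\colim_j \Hom_\cC(\Omega^{a+j}Y_0,\Omega^{dk+j}X),\qquad p\in\colim_j \Hom_\cC(\Omega^{dk+j}X,\Omega^{a+j}Y_0).
\]
First I would represent $i$ and $p$ by honest morphisms $\bar i$, $\bar p$ of $\cC$ at a common index $j$ (large enough that $a+j\geq 0$ and $dk+j\geq 0$). The relation $p\circ i=1_Y$ then says that $\bar p\circ\bar i$ and the identity represent the same element of the filtered colimit $\colim_j\End_\cC(\Omega^{a+j}Y_0)$; since the colimit is filtered, after applying $\Omega$ enough times (i.e. increasing $j$) I obtain the honest equality $\bar p\circ\bar i = 1_{\Omega^{a+j}Y_0}$ in $\cC$. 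Because $j$ may be enlarged at will, I would moreover choose it so that $dk+j = dm$ is a nonnegative multiple of $d$.

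At this stage the equation $\bar p\,\bar i = \mathrm{id}$ exhibits $W:=\Omega^{a+j}Y_0$ as a genuine direct summand of $\Omega^{dm}X$ in $\cC$. Using $\Omega^d(\cX)\subset\cX$ we get $\Omega^{dm}X\in\cX$, and since $\cX$ is closed under direct summands in $\cC$ (part \ref{d-cluster tilting in left triangulated:1} of Definition \ref{d-cluster tilting in left triangulated}) it follows that $W\in\cX$. Finally the canonical isomorphisms $(\Omega^{\ell}C,n)\cong(C,n+\ell)$ in $\bZ\cC$ give $Y=(Y_0,a)\cong(W,-j)$, and $-j = d(k-m)$ is a multiple of $d$, so $Y\in d\bZ\cX$. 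The step I expect to be the real obstacle is the passage from the formal splitting of $e$ in $\bZ\cC$ to an honest retract in $\cC$: one must observe that idempotent completeness of $\cC$ is \emph{not} needed, because the filtered-colimit description forces $\bar p\,\bar i = \mathrm{id}$ to hold on the nose at some finite stage, and the only care required is to select that stage with index divisible by $d$ so that $\Omega^d(\cX)\subset\cX$ applies.
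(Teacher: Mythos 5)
Your overall strategy --- represent the data at a finite stage of the filtered colimit, push the stage to be divisible by $d$, then invoke $\Omega^d(\cX)\subset\cX$ and closure of $\cX$ --- is sound, and most details (common representatives, promoting an equality in the colimit to an honest equality in $\cC$, the bookkeeping $-j=d(k-m)$) are handled correctly. But the step you yourself flag as the crux is exactly where the argument breaks. The equality $\bar p\,\bar i = 1_W$ only exhibits $W=\Omega^{a+j}Y_0$ as a \emph{retract} of $\Omega^{dm}X$, i.e.\ it splits the idempotent $\bar i\,\bar p$; it does not exhibit $W$ as a direct summand. To be a direct summand one needs an object $W'\in\cC$ with $\Omega^{dm}X\cong W\oplus W'$, which amounts to splitting the \emph{complementary} idempotent $1-\bar i\,\bar p$ in $\cC$. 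In a general additive category a retract need not be a direct summand (e.g.\ in the full subcategory of finite-dimensional vector spaces of dimension $\neq 1$, the space $k^2$ is a retract of $k^3$ but has no complement there), and part \ref{d-cluster tilting in left triangulated:1} of Definition \ref{d-cluster tilting in left triangulated} only asserts that $\cX$ is closed under direct summands, not under retracts; $\cC$ is nowhere assumed idempotent complete. So your closing remark that idempotent completeness is not needed is justified incorrectly: realizing $p\circ i=1$ on the nose removes the need to split $\bar i\,\bar p$, but not the need to produce a complement.

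The repair is immediate, and the reason you need it is that you discarded the complement too early: you are \emph{given} a decomposition $M\cong Y\oplus Z$ in $\bZ\cC$, so write $Z=(Z_0,b)$, represent all four structure maps $i,p,i',p'$ at a common stage $j\equiv 0 \pmod d$, and promote all five biproduct identities $pi=1$, $p'i'=1$, $pi'=0$, $p'i=0$, $ip+i'p'=1_M$ to honest identities in $\cC$ (filteredness again). This yields a genuine isomorphism $\Omega^{dm}X\cong \Omega^{a+j}Y_0\oplus\Omega^{b+j}Z_0$ in $\cC$, to which part \ref{d-cluster tilting in left triangulated:1} of Definition \ref{d-cluster tilting in left triangulated} applies, and the rest of your argument goes through verbatim. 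For comparison, the paper's proof never descends to morphism representatives: it normalizes the two summands to the common index $n=\min(n_1,n_2)$, so that the direct sum is the single object $(\Omega^{n_1-n}(C_1)\oplus\Omega^{n_2-n}(C_2),n)$, then uses the characterization that $(C,n)\in d\bZ\cX$ iff $\Omega^{dk+n}(C)\in\cX$ for some $k$, and applies closure of $\cX$ under summands to that one object. Keeping both summands in hand throughout is precisely what makes the retract-versus-summand issue never arise there.
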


\begin{proof}
Two objects $(C,n)$ and $(C',n')$ are isomorphic in $\bZ\cC$ if and only if there exists an integer $k$ such that $\Omega^{k+n}(C)$ and $\Omega^{k+n'}(C')$ are isomorphic in $\cC$. Hence, $d\bZ\cX$ consists of all objects $(C,n)$ such that there exists an integer $k$ with $\Omega^{dk+n}(C)\in \cX$. Now assume that 
\[
(C_1,n_1)\oplus (C_2,n_2)\in d\bZ\cX.
\]
Choose $n:=\min(n_1,n_2)$. Then $(C_1,n_1)\cong (\Omega^{n_1-n}(C_1),n)$ and $(C_2,n_2)\cong (\Omega^{n_2-n}(C_2),n)$, and hence
\[
(C_1,n_1)\oplus (C_2,n_2)\cong (\Omega^{n_1-n}(C_1)\oplus \Omega^{n_2-n}(C_2),n)\in d\bZ\cX.
\] 
Therefore there exists an integer $k$ such that 
\[
\Omega^{dk+n_1}(C_1)\oplus \Omega^{dk+n_2}(C_2)\in \cX.
\]
Since $\cX$ is closed under direct summands by Definition \ref{d-cluster tilting in left triangulated} \ref{d-cluster tilting in left triangulated:1}, we have that 
\[
\Omega^{dk+n_1}(C_1)\in \cX \quad \text{and} \quad \Omega^{dk+n_2}(C_2)\in \cX
\]
and hence
\[
(C_1,n_1)\in d\bZ\cX \quad \text{and} \quad (C_2,n_2)\in d\bZ\cX.
\]
This proves the claim.
\end{proof}

\begin{Lemma}\label{Hom is zero}
If $(X,dn)\in d\bZ\cX$ and $(X',dn')\in d\bZ\cX$, then
\[
\Hom_{\bZ\cC}(\Omega^i(X,dn),(X',dn'))=0
\]
for $1\leq i\leq d-1$
\end{Lemma}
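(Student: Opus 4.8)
```latex
\textbf{Proof plan.}
The plan is to reduce the vanishing statement about $\bZ\cC$ to the corresponding vanishing statement in the left triangulated category $\cC$, where it is governed by axioms \ref{d-cluster tilting in left triangulated:3} and \ref{d-cluster tilting in left triangulated:4} of Definition \ref{d-cluster tilting in left triangulated}. First I would unwind the definition of the morphism spaces in the stabilization: since
\[
\Hom_{\bZ\cC}\big(\Omega^i(X,dn),(X',dn')\big)=\Hom_{\bZ\cC}\big((X,dn+i),(X',dn')\big)=\colim_{k}\Hom_{\cC}\big(\Omega^{dn+i+k}(X),\Omega^{dn'+k}(X')\big),
\]
and this is a filtered colimit, it suffices to show that each term in the colimit vanishes for $k$ large enough. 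Choosing $k$ so that both $dn+i+k\geq 0$ and $dn'+k\geq 0$, the relevant group is $\Hom_{\cC}(\Omega^{a+i}(X),\Omega^{b}(X'))$ where $a=dn+k$ and $b=dn'+k$, with $a,b\geq 0$.

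The key step is to use the hypothesis $\Omega^d(\cX)\subset \cX$, which holds because $\cX$ is $d\bZ$-cluster tilting in $\cC$. This guarantees that $\Omega^{a}(X)$ and $\Omega^{b}(X')$ can be arranged to lie in $\cX$ after increasing $k$ by a multiple that keeps the exponents in $d\bZ$; more precisely, after replacing $k$ by $k+d\ell$ for suitable $\ell\geq 0$ I may assume $a$ and $b$ are nonnegative multiples of $d$, so that $\Omega^a(X)$ and $\Omega^b(X')$ are again objects of $\cX$. The colimit is unchanged by passing to a cofinal subsystem, so this is harmless. Then I would like to apply axiom \ref{d-cluster tilting in left triangulated:4}, which asserts $\Hom_{\cC}(\Omega^i(Y'),Y)=0$ for $Y,Y'\in\cX$ and $1\leq i\leq d-1$. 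Writing $Y'=\Omega^{a}(X)\in\cX$ and $Y=\Omega^{b}(X')\in\cX$, the group $\Hom_{\cC}(\Omega^{a+i}(X),\Omega^b(X'))=\Hom_{\cC}(\Omega^i(\Omega^a X),\Omega^b X')=\Hom_{\cC}(\Omega^i(Y'),Y)$ vanishes for the given range of $i$.

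The main obstacle is the bookkeeping needed to land the two exponents $a+i$ and $b$ in a form to which axiom \ref{d-cluster tilting in left triangulated:4} applies, since that axiom requires both arguments to be objects of $\cX$ and the \emph{difference} of exponents to lie in $\{1,\dots,d-1\}$. Here the exponent on $X$ is $a+i$ and on $X'$ is $b$, and after the cofinal adjustment $a,b\in d\bZ_{\geq 0}$ with $1\leq i\leq d-1$; thus $\Omega^a X,\Omega^b X'\in\cX$ and the displayed Hom-group is exactly $\Hom_{\cC}(\Omega^i(\Omega^a X),\Omega^b X')$, which is of the required shape. I would take care to verify that the filtered colimit genuinely stabilizes to zero rather than merely having cofinally vanishing terms—but since the transition maps in a filtered colimit of zero groups (on a cofinal subsystem) force the colimit to vanish, this is immediate once each relevant term is zero. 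Assembling these observations gives $\Hom_{\bZ\cC}(\Omega^i(X,dn),(X',dn'))=0$ for $1\leq i\leq d-1$, as required.
```
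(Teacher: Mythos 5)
Your proof is correct and takes essentially the same route as the paper: both unwind $\Hom_{\bZ\cC}(\Omega^i(X,dn),(X',dn'))$ as a filtered colimit, pass to the cofinal subsystem of indices divisible by $d$ so that $\Omega^d(\cX)\subset\cX$ places both shifted objects in $\cX$, and then apply axiom \ref{d-cluster tilting in left triangulated:4} of Definition \ref{d-cluster tilting in left triangulated} to annihilate each term, hence the colimit. Your write-up merely makes explicit the cofinality bookkeeping that the paper's proof leaves implicit.
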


\begin{proof}
We have that 
\[
\Hom_{\bZ\cC}(\Omega^i(X,dn),(X',dn')) = \colim_{k'} \Hom_{\cC}(\Omega^i(\Omega^{k'+dn}(X)),\Omega^{k'+dn'}(X')).
\]
But
\[
\Hom_{\cC}(\Omega^i(\Omega^{dk+dn}(X)),\Omega^{dk+dn'}(X'))=0
\]
for all $k$ such that $dk+dn>0$ and $dk+dn'>0$ by Definition \ref{d-cluster tilting in left triangulated} \ref{d-cluster tilting in left triangulated:4}, since $\Omega^{dk+dn}(X)\in \cX$ and $\Omega^{dk'+dn}(X')\in \cX$. Hence, the colimit must be $0$, which proves the claim.
\end{proof}

\begin{Theorem}\label{stablization d-cluster tilting}
Let $\cC$ be a left triangulated category and $\cX$ a $d\bZ$-cluster tilting subcategory of $\cC$. Then $d\bZ\cX$ is a $d\bZ$-cluster tilting subcategory of the triangulated category $\bZ\cC$.
\end{Theorem}

\begin{proof}
We show that $d\bZ\cX$ satisfies Definition \ref{d-cluster tilting in left triangulated}. Note that axiom \ref{d-cluster tilting in left triangulated:1} and \ref{d-cluster tilting in left triangulated:4} follows from Lemma \ref{closed under direct summands} and Lemma \ref{Hom is zero} respectively, and axiom \ref{d-cluster tilting in left triangulated:3} is clear since $\bZ\cC$ is a triangulated category. It therefore only remains to show axiom \ref{d-cluster tilting in left triangulated:2}. Let $(C,n)\in \bZ\cC$ be arbitrary. Choose $k$ such that $dk<n$. Then we have an isomorphism $(C,n)\cong (\Omega^{n-dk}(C),dk)$. Hence, we can assume for simplicity that $n=dk$. Now choose $(d+2)$-angles
\[
0\to C\to X^1\to \cdots X^d\to 0
\]
\[
0\to X_d\to \cdots \to X_1\to C\to 0
\]
in $\cC$ with $X_i,X^i\in \cX$ for $1\leq i\leq d$. Applying Axiom (T3) in Definition \ref{Left triangulated categories} repeatedly, we obtain $(d+2)$-angles
\[
0\to \Omega^{dk}(C)\to \Omega^{dk}(X^1)\to \cdots \Omega^{dk}(X^d)\to 0
\]
\[
0\to \Omega^{dk}(X_d)\to \cdots \to \Omega^{dk}(X_1)\to \Omega^{dk}(C)\to 0
\]
in $\cC$. But they give $(d+2)$-angles 
\[
0\to (C,dk)\to (X^1,dk)\to \cdots (X^d,dk)\to 0
\]
\[
0\to (X_d,dk)\to \cdots \to (X_1,dk)\to (C,dk)\to 0
\]
in $\bZ\cC$, which prove the claim.
\end{proof}

\begin{Remark}
Note that the proof of Theorem \ref{stablization d-cluster tilting} does not use axiom \ref{d-cluster tilting in left triangulated:3} in Definition \ref{d-cluster tilting in left triangulated}. This axiom is needed to prove Theorem \ref{correspondence cluster tilting exact and stable} (and in particular Lemma \ref{rigidity}) to get a correspondence between $d\bZ$-cluster tilting subcategories in the exact category $\cE$ and in the left triangulated category $\underline{\cE}$
\end{Remark}

The category $d\bZ\cX$ is also equivalent to the stabilization of $\cX$ with respect to the functor
\[
\Omega^d\colon \cX\to \cX.
\]
Hence, it can be computed without describing the categories $\cC$ and $\bZ\cC$, which is more complicated to do in general.

Since $d\bZ\cX$ is a $d\bZ$-cluster tilting subcategory of $\bZ\cC$, it has the structure of a $(d+2)$-angulated category \cite[Theorem 4.1]{GKO13}, where 
\[
\Sigma^d\colon d\bZ\cX\to d\bZ\cX \quad \Sigma^d(X,nd)=(X,n(d-1))
\]
is the suspension functor (see \eqref{suspension and loop}) applied $d$ times.  The $(d+2)$-angles in the sense of \cite{GKO13} are all $(d+2)$-angles in $\bZ\cC$ in our sense 
\[
\Omega^d(E_1)\xrightarrow{\alpha_{d+2}} E_{d+2}\xrightarrow{\alpha_{d+1}}E_{d+1}\xrightarrow{\alpha_{d}}\cdots \xrightarrow{\alpha_{1}}E_{1} 
\]
where $E_i\in d\bZ\cX$ for $1\leq i\leq d+2$. 

\begin{Lemma}\label{d+2 angulated structure of dZX}
A sequence 
\[
\Omega^d(X_1,dn_1)\to (X_{d+2},dn_{d+2})\xrightarrow{}(X_{d+1},dn_{d+1})\xrightarrow{} \cdots \xrightarrow{}(X_{1},dn_{1})
\]
in $d\bZ\cX$ with $X_i\in \cX$ for $1\leq i\leq d+2$ is a $(d+2)$-angle if and only if it is induced from a sequence in $\cC$ of the form
\[
\Omega^{d(k+n_1+1)}(X_1)\xrightarrow{u_{d+2}}\Omega^{d(k+n_{d+2})}(X_{d+2})\xrightarrow{u_{d+1}}\cdots \xrightarrow{u_1} \Omega^{d(k+n_1)}(X_1)
\]
where
\[
\resizebox{\textwidth}{!}{$\Omega^{d(k+n_1+1)}(X_1)\xrightarrow{(-1)^{dk}u_{d+2}}\Omega^{d(k+n_{d+2})}(X_{d+2})\xrightarrow{(-1)^{dk}u_{d+1}}\cdots \xrightarrow{(-1)^{dk}u_1} \Omega^{d(k+n_1)}(X_1)$}
\]
is a $(d+2)$-angle in $\cC$.
\end{Lemma}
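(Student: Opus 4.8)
The plan is to unwind both definitions and observe that the $(d+2)$-angle condition for $d\bZ\cX$, phrased via the $(d+2)$-angles of $\bZ\cC$, is literally the same condition obtained by inserting a suitable factor $k=dk'$ into the definition of standard triangles in $\bZ\cC$. First I would recall from the paragraph preceding Theorem \ref{equivalence of singularity category} that a standard triangle $\Omega(E_1)\to E_{d+2}\to \cdots \to E_1$ in $\bZ\cC$ is by definition one induced from a sequence in $\cC$ realizing a triangle of $\cC$ after twisting all maps by the sign $(-1)^{k}$, for some integer $k$. Since $d\bZ\cX$ is a $d\bZ$-cluster tilting subcategory of $\bZ\cC$ (Theorem \ref{stablization d-cluster tilting}), its $(d+2)$-angles in the sense of \cite{GKO13} are exactly the $(d+2)$-angles of $\bZ\cC$ with all terms in $d\bZ\cX$, as recorded just before the statement. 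Hence I must compare the general $(d+2)$-angle notion in the left triangulated category $\bZ\cC$ with sequences induced from $\cC$.

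Next I would make the objects explicit. Each term $(X_i, dn_i)$ has $X_i\in\cX$, and for a chosen integer $k$ with all the relevant shifts nonnegative, a representative of the sequence in $\bZ\cC$ is obtained by applying $\Omega^{dk}$ (equivalently, passing to the index $dk$) to the terms, giving objects $\Omega^{d(k+n_i)}(X_i)$ in $\cC$. The key point is that because all twists here are by $\Omega^d$ and all indices are multiples of $d$, the integer governing the standard-triangle sign in $\bZ\cC$ is of the form $dk$, so the sign $(-1)^{dk}$ appearing in the statement is exactly the sign $(-1)^{k'}$ from the definition of standard triangles, with $k'=dk$. Thus the sequence in $d\bZ\cX$ is a $(d+2)$-angle of $\bZ\cC$ precisely when, after twisting the induced $\cC$-sequence by $(-1)^{dk}$, one obtains a $(d+2)$-angle in $\cC$ in the sense defined at the start of Section \ref{Section Cluster tilting subcategories of left triangulated categories}. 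This matches the two displayed sequences in the statement verbatim.

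The remaining verification is that the $(d+2)$-angle condition in $\bZ\cC$ unwinds, octahedron by octahedron, to the $(d+2)$-angle condition in $\cC$ on the chosen representatives. For this I would appeal to the diagram defining a $(d+2)$-angle: such an angle is built from a string of genuine triangles $\Omega(C_{i.5})\to C_{i+.5}\to C_{i+1}\to C_{i.5}$ together with the commuting non-oriented triangles, and each triangle of $\bZ\cC$ is by Theorem \ref{equivalence of singularity category}(i) a standard triangle, hence induced from a triangle of $\cC$ after the sign twist. Since the representatives can all be taken at a common index $dk$ (the colimits defining $\bZ\cC$ and $\bZ\cX$ are filtered, so one may raise $k$ to synchronize all the triangles and the connecting morphism $\alpha_{d+2}$), the entire defining diagram in $\bZ\cC$ is the image under $\Omega^{dk}$ of the corresponding diagram in $\cC$, and conversely any such diagram in $\cC$ descends to one in $\bZ\cC$. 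The main obstacle I anticipate is purely bookkeeping: tracking the sign $(-1)^{dk}$ consistently through each triangle in the chain and through the composite defining $\alpha_{d+2}$, and confirming that choosing a common $k$ for all constituent triangles does not alter the resulting $(d+2)$-angle up to isomorphism. Both directions then follow by the naturality of the induction of standard triangles, completing the proof.
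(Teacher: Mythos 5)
Your proposal is correct and takes essentially the same approach as the paper, whose entire proof is the single sentence that the claim ``follows immediately from the description of the triangles in $\bZ\cC$ together with axiom (T3) in Definition \ref{Left triangulated categories}.'' Your synchronization of all constituent triangles of the $(d+2)$-angle at a common index $dk$, with the resulting sign $(-1)^{dk}$, is precisely the point where axiom (T3) enters (applying $\Omega$ to a triangle via three rotations produces the sign), so you have merely spelled out the bookkeeping the paper leaves implicit.
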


\begin{proof}
This follows immediately from the description of the triangles in $\bZ\cC$ together with axiom (T3) in Definition \ref{Left triangulated categories}.
\end{proof}

\section{\texorpdfstring{$d$}{}-cluster tilting in stable categories}\label{Section d-cluster tilting in stable categories}
Let $\cE$ be an exact category with enough projectives. In this section we compare cluster tilting subcategories in the exact category $\cE$ and those in the left triangulated category $\underline{\cE}$. Our main goal is to prove the following theorem:

\begin{Theorem}\label{correspondence cluster tilting exact and stable}
Let $\cE$ be an exact category with enough projectives $\cP$, and $\cM$ a full subcategory of $\cE$.
\begin{enumerate}
\item\label{correspondence cluster tilting exact and stable:1} $\cM$ is a $d$-cluster tilting subcategory of $\cE$ if and only if $\underline{\cM}$ is a $d$-cluster tilting subcategory of $\underline{\cE}$;
\item\label{correspondence cluster tilting exact and stable:2} $\cM$ is a $d\bZ$-cluster tilting subcategory of $\cE$ if and only if $\underline{\cM}$ is a $d\bZ$-cluster tilting subcategory of $\underline{\cE}$.
\end{enumerate}
\end{Theorem}

In the special case when $\cE$ is Frobenius (and hence $\underline{\cE}$ is triangulated) the theorem is easy and well-known.

We start by proving one direction of the theorem.

\begin{proof}[Proof of ``only if" part of Theorem \ref{correspondence cluster tilting exact and stable}]
If $\cM$ is $d\bZ$-cluster tilting, then for each $1\leq i\leq d-1$, we have $\Ext^d_{\cE}(\Omega^dM',M)=\Ext^{i+d}_{\cE}(M',M)=0$ where $M,M'\in \cM$. Therefore $\Omega^d(\underline{\cM})\subseteq \underline{\cM}$. Hence the ``only if" part of Theorem \ref{correspondence cluster tilting exact and stable} \ref{correspondence cluster tilting exact and stable:2} follows from the ``only if" part of Theorem \ref{correspondence cluster tilting exact and stable} \ref{correspondence cluster tilting exact and stable:1} by Remark \ref{on dZ-cluster tilting} \ref{on dZ-cluster tilting:2}.  

Assume $\cM$ is $d$-cluster tilting in $\cE$. Since $\cM$ is closed under direct summands, it follows using the description of the isomorphisms from \eqref{isomorphism in stable cat} that $\underline{\cM}$ is closed under direct summands. Now for $E\in \cE$ we can choose exact sequences
\begin{align*}
& 0\to E\xrightarrow{f^1}M^1\xrightarrow{f^2}\cdots \xrightarrow{f^d}M^d\to 0 \\
& 0\to M_d\xrightarrow{g_d} \cdots \xrightarrow{g_2} M_1\xrightarrow{g_1} E\to 0
\end{align*}
in $\cE$ with $M_i,M^i\in \cM$ for $1\leq i\leq d$ by Proposition \ref{characterization of cluster tilting}. By definition of the left triangulated structure of $\underline{\cE}$, we get $(d+2)$-angles
\begin{align*}
\xymatrix@!=.5pc{ & \underline{M^1} \ar[rr]^{\underline{f^2}} \ar[rd] && \underline{M^2}\ar[rd] && \cdots && \underline{M^{d-1}} \ar[rd]^{}\ar[rr]^{\underline{f^d}} && \underline{M^{d-1}} \ar[rd]\\ 
\underline{E} \ar[ru]^{\underline{f^1}} \ar@{<-{|-}}[rr] && \underline{\Coker f^1} \ar[ru]\ar@{<-{|-}}[rr] &&\underline{\Coker f^2}&&\cdots&& \underline{M^d}\ar[ru]^{1} \ar@{<-{|-}}[rr] && 0 } 
\end{align*}
and
\begin{align*}
\xymatrix@!=.5pc{ &  \underline{M_{d-1}} \ar[rr]^{\underline{f_{d-1}}} \ar[rd] && \underline{M_{d-2}}  && \cdots && \underline{M_{1}}\ar[rr]^{\underline{f_1}} \ar[rd]^{} && \underline{E} \ar[rd]^{} \\ 
\underline{M_{d}} \ar@{<-{|-}}[rr] \ar[ru]^{\underline{f_{d}}} && \underline{M_{d-2.5}} \ar[ru]^{} && \cdots  && \underline{\Ker f_1}\ar@{<-{|-}}[rr]\ar[ru] && \underline{E} \ar[ru]^{1} \ar@{<-{|-}}[rr] && 0 .} 
\end{align*}
This shows that part \ref{d-cluster tilting in left triangulated:2} of Definition \ref{d-cluster tilting in left triangulated} holds for $\underline{\cM}$. Also, by Lemma \ref{Syzygy functor full and faithful} the map 
\[
\Hom_{\underline{\cE}}(\Omega^{i-1}(\underline{M}),\underline{E})\to \Hom_{\underline{\cE}}(\Omega^{i}(\underline{M}),\Omega(\underline{E})) \quad \underline{f} \mapsto \Omega(\underline{f})
\]
is an isomorphism for any $M\in \cM$, $E\in \cE$ and $1\leq i\leq d-1$ since
\[
\Ext^1_{\cE}(\Omega^{i-1}M,P)\cong \Ext^i_{\cE}(M,P)=0
\]
for all $P\in \cP$. Hence, part \ref{d-cluster tilting in left triangulated:3} of Definition \ref{d-cluster tilting in left triangulated} also holds for $\underline{\cM}$. Finally, to prove part \ref{d-cluster tilting in left triangulated:4}, we use the basic fact that if $\Ext^1_{\cE}(E,\cP)=0$, then 
\[
\Hom_{\underline{\cE}}(\Omega(\underline{E}),\underline{E'})\cong \Ext^1_{\cE}(E,E')
\]
for all $E'\in \cE$. In particular, since $\Ext^{i}_{\cE}(M,\cP)=0$ for all $1\leq i \leq d-1$ and $M\in \cM$, it follows that
\[
\Hom_{\underline{\cE}}(\Omega^{i}(\underline{M}),\underline{M'})\cong \Ext^1_{\cE}(\Omega^{i-1}M,M')\cong \Ext^i_{\cE}(M,M')=0
\]
for $1\leq i\leq d-1$ and $M'\in \cM$. This proves the claim.
\end{proof}

\begin{proof}[Proof of Theorem \ref{Main theorem intro}]
By Theorem \ref{stablization d-cluster tilting} and the ``only if" part of Theorem \ref{correspondence cluster tilting exact and stable} we know that $d\bZ\underline{\cM}$ is $d\bZ$-cluster tilting in $\bZ\underline{\cE}$. Furthermore, under the equivalence $\bZ\underline{\cE}\xrightarrow{\cong} K^{-,b}(\cP)/K^b(\cP)$ in Theorem \ref{equivalence of singularity category} an object $(\underline{E},n)$ gets sent to $P^{\bullet}[-n]$ where $P^{\bullet}$ is a projective resolution of $E$. Then $\underline{Z^{-di}(P^{\bullet}[-n])}=\Omega^{di+n}(\underline{E})$ holds for $i\gg  0$. Thus $\underline{Z^{-di}(P^{\bullet}[-n])}\in \underline{\cM}$ holds for all $i\gg 0$ exactly when $(\underline{E},n)\in d\bZ\underline{\cM}$. Hence, $d\bZ\underline{\cM}$ corresponds to the subcategory in Theorem \ref{Main theorem intro} under the equivalence $\bZ\underline{\cE}\xrightarrow{\cong} K^{-,b}(\cP)/K^b(\cP)$, which proves the claim.
\end{proof}

\begin{Remark}
Combining Lemma \ref{d+2 angulated structure of dZX} with the description of the triangles in $\underline{\cE}$, one obtains a description of the $(d+2)$-angles in $d\bZ\underline{\cM}$. Explicitly, consider all sequences
\[
\Omega^d(\underline{M_1},dn_1)\to (\underline{M}_{d+2},dn_{d+2})\xrightarrow{} \cdots \xrightarrow{}(\underline{M}_{1},dn_{1}) 
\]
in $d\bZ\underline{\cE}$ with $\underline{M_i}\in \underline{\cM}$ for all $i$, which arises from a sequence in $\cE$
\[
N_1'\xrightarrow{u_{d+2}} N_{d+2}\xrightarrow{u_{d+1}} \cdots \xrightarrow{u_{1}}N_1
\]
under isomorphisms $\underline{N_i}\cong \Omega^{d(k+n_{i})}\underline{M_{i}}$ and $\underline{N'_{1}}\cong \Omega^{d(k+n_{1}+1)}\underline{M_{1}}$, and such that there exists a commutative diagram
\begin{align}\label{Standard d+2-angle}
\xymatrix@C=3em{
0\ar[r]& N_{d+2} \ar[r]^-{(-1)^{dk}u_{d+1}} & N_{d+1} \ar[r]^-{(-1)^{dk}u_{d}} & \cdots \ar[r]^{(-1)^{dk}u_2} & N_2\ar[r]^-{(-1)^{dk}u_1}& N_{1} \ar[r] & 0  \\
0\ar[r] & N'_{1} \ar[r]^{}\ar[u]^{(-1)^{dk}u_{d+2}} & P_d \ar[r]^{}\ar[u] &  \cdots \ar[r] & P_1 \ar[r] \ar[u] & N_{1} \ar[r] \ar[u]^1 & 0
}
\end{align}
with exact rows, where $P_1,\cdots, P_d$ are projective. If we call such a sequence in $d\bZ\underline{\cM}$ a standard $(d+2)$-angle, then a $(d+2)$-angle of $d\bZ\underline{\cM}$ is precisely a sequence which is isomorphic to a standard $(d+2)$-angle. 
\end{Remark}

We now want to show the ``if" part of Theorem \ref{correspondence cluster tilting exact and stable}. To this end, we fix a full subcategory $\cM$ of $\cE$ and assume $\underline{\cM}$ is $d$-cluster tilting in the left triangulated category $\underline{\cE}$. Our goal is to show that $\cM$ is $d$-cluster tilting in $\cE$.

\begin{Lemma}\label{rigidity}
If $M,M'\in \cM$, then
\[
\Ext^i_{\cE}(M,M')=0
\]
for $1\leq i\leq d-1$.
\end{Lemma}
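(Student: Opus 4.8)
The plan is to fix $i$ with $1\le i\le d-1$ (the assertion is vacuous when $d=1$) and prove $\Ext^i_{\cE}(M,M')=0$ by reducing it to a single $\Ext^1$ and then feeding both conditions \ref{d-cluster tilting in left triangulated:3} and \ref{d-cluster tilting in left triangulated:4} of Definition \ref{d-cluster tilting in left triangulated}, which are available because $\underline M,\underline{M'}\in\cX$, into Lemma \ref{Syzygy functor full and faithful}. First I would fix a projective resolution of $M$ and let $\Omega^{i-1}M$ denote its $(i-1)$-st syzygy, an object of $\cE$ whose class in $\underline\cE$ is $\Omega^{i-1}\underline M$. Dimension shifting along the short exact sequences $0\to\Omega^{j+1}M\to P_j\to\Omega^jM\to 0$, using that each $P_j$ is projective, yields $\Ext^i_{\cE}(M,M')\cong\Ext^1_{\cE}(\Omega^{i-1}M,M')$, so it is enough to prove that the latter vanishes.

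The main step, and the one I expect to carry the argument, is to notice that condition \ref{d-cluster tilting in left triangulated:3} of Definition \ref{d-cluster tilting in left triangulated}, applied with $X=\underline M\in\cX$ and the given index $i$, asserts exactly that the map $\Hom_{\underline\cE}(\Omega^{i-1}\underline M,C)\to\Hom_{\underline\cE}(\Omega^{i}\underline M,\Omega C)$ is an isomorphism for every $C\in\underline\cE$. This is precisely statement \ref{syzygy full:2} of Lemma \ref{Syzygy functor full and faithful} for the object $E=\Omega^{i-1}M$, so that lemma gives statement \ref{syzygy full:1}, namely $\Ext^1_{\cE}(\Omega^{i-1}M,P)=0$ for all $P\in\cP$. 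The key point is that in a general exact category the vanishing of a stable Hom group does not by itself force the corresponding $\Ext^1$ to vanish; it is this auxiliary vanishing against the projectives, extracted from condition \ref{d-cluster tilting in left triangulated:3} through Lemma \ref{Syzygy functor full and faithful}, that will bridge the gap.

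To finish, set $N:=\Omega^{i-1}M$ and apply $\Hom_{\cE}(-,M')$ to the defining sequence $0\to\Omega N\xrightarrow{\iota}P_0\to N\to 0$, which identifies $\Ext^1_{\cE}(N,M')$ with the cokernel of $\iota^{*}\colon\Hom_{\cE}(P_0,M')\to\Hom_{\cE}(\Omega N,M')$. Because $\Ext^1_{\cE}(N,P)=0$ for every projective $P$, each morphism $\Omega N\to P$ extends along $\iota$ to a morphism $P_0\to P$; consequently every morphism $\Omega N\to M'$ that factors through a projective already factors through $\iota$, i.e.\ lies in $\im(\iota^{*})$. On the other hand condition \ref{d-cluster tilting in left triangulated:4}, applied with $X'=\underline M$ and $X=\underline{M'}$, gives $\Hom_{\underline\cE}(\Omega^{i}\underline M,\underline{M'})=0$ for $1\le i\le d-1$, so every morphism $\Omega N\to M'$ does factor through a projective. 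Combining the two facts, $\iota^{*}$ is surjective, whence $\Ext^1_{\cE}(N,M')=0$ and therefore $\Ext^i_{\cE}(M,M')=0$, as desired.
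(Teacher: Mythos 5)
Your proof is correct and takes essentially the same route as the paper's: both arguments extract $\Ext$-vanishing against projectives from condition \ref{d-cluster tilting in left triangulated:3} of Definition \ref{d-cluster tilting in left triangulated} via Lemma \ref{Syzygy functor full and faithful}, use condition \ref{d-cluster tilting in left triangulated:4} to factor morphisms $\Omega^i M\to M'$ through a projective, and then lift that factorization along the projective resolution to conclude the vanishing of the Ext group. The only difference is organizational: you dimension-shift at the outset to a single $\Ext^1$ and one short exact sequence, whereas the paper runs the identical computation as exactness of $\Hom_{\cE}(P_{\bullet},M')$ along the whole resolution.
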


\begin{proof}
By Lemma \ref{Syzygy functor full and faithful} and Definition \ref{d-cluster tilting in left triangulated} \ref{d-cluster tilting in left triangulated:3} we have that 
\[
\Ext^i_{\cE}(M,P)\cong \Ext^1_{\cE}(\Omega^{i-1}M,P)=0
\]
for $P\in \cE$ projective and $1\leq i\leq d-1$. Next, using the fact that if $\Ext^1_{\cE}(E,\cP)=0$, then $ \Ext^1_{\cE}(E,E')\cong \Hom_{\underline{\cE}}(\Omega(\underline{E}),\underline{E'})$ for all $E'\in \cE$, it follows that 
\[
\Ext^i_{\cE}(M,M')\cong \Ext^1_{\cE}(\Omega^{i-1}M,M')\cong \Hom_{\underline{\cE}}(\Omega^i(\underline{M}),\underline{M'})=0
\]
for all $1\leq i\leq d-1$ by Definition \ref{d-cluster tilting in left triangulated} \ref{d-cluster tilting in left triangulated:4}. This proves the claim.
\end{proof}

\begin{Lemma}\label{resolving and coresolving by objects in M}
For any object $E\in \cE$ there exist exact sequences
\[
0\to E\to M^1\to \cdots \to M^d\to 0
\]
and
\[
0\to M_d\to \cdots \to M_1\to E\to 0
\]
where $M_i,M^i\in \cM$ for $1\leq i\leq d$.
\end{Lemma}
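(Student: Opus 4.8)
The plan is to realize, by honest exact sequences in $\cE$, the two $(d+2)$-angles in $\underline{\cE}$ that are provided by the hypothesis that $\cX$ is $d$-cluster tilting: for the object $\underline{E}$ there are $(d+2)$-angles $0\to\underline{E}\to X^1\to\cdots\to X^d\to 0$ and $0\to X_d\to\cdots\to X_1\to\underline{E}\to 0$ with all $X^i,X_i\in\cX$ (Definition \ref{d-cluster tilting in left triangulated}). Writing $X^i=\underline{M^i}$ and $X_i=\underline{M_i}$ for objects $M^i,M_i\in\cM$ (legitimate since $\cM$ is the full preimage of $\cX$), and recalling that $\cP\subseteq\cM$, it suffices to lift each constituent triangle of these $(d+2)$-angles to a short exact sequence in $\cE$ whose terms lie in $\cM$, and then splice. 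The lifting mechanism is Theorem \ref{stable category is left triangulated}: every triangle of $\underline{\cE}$ is isomorphic to one induced by a short exact sequence, and such a short exact sequence is built from the defining diagram over a projective presentation of its right-hand term.

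First I would dispose of the resolution $0\to M_d\to\cdots\to M_1\to E\to 0$, which is the easy direction. Decompose the $(d+2)$-angle $0\to X_d\to\cdots\to X_1\to\underline{E}\to 0$ into triangles $\Omega(Y_{i-1})\to Y_i\to X_i\to Y_{i-1}$ with $Y_0=\underline{E}$ and $Y_{d-1}=X_d$, and proceed by induction. At the $i$-th stage I have $N_{i-1}\in\cE$ with $\underline{N_{i-1}}\cong Y_{i-1}$; choosing any representative $M_i\to N_{i-1}$ of the base map $X_i\to Y_{i-1}$ and an admissible epimorphism $P\to N_{i-1}$ from a projective (available since $\cE$ has enough projectives), the combined map $M_i\oplus P\to N_{i-1}$ is an admissible epimorphism lifting the base map. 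Its kernel $N_i$ satisfies $\underline{N_i}\cong Y_i$ by uniqueness up to isomorphism of the cone over a fixed morphism. After $d-1$ steps $\underline{N_{d-1}}\cong X_d\in\cX$, so $N_{d-1}\in\cM$, and splicing the sequences $0\to N_i\to M_i\oplus P\to N_{i-1}\to 0$ yields the resolution, with $M_d:=N_{d-1}$ and all middle terms in $\cM$. This direction uses only admissible epimorphisms, which enough projectives supply freely.

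The coresolution $0\to E\to M^1\to\cdots\to M^d\to 0$ is the genuinely harder, dual direction, and I expect the production of admissible \emph{monomorphisms} out of $E$ to be the main obstacle: since $\cE$ is assumed to have enough projectives but not enough injectives, I cannot simply dualize the previous step, and lifting the triangle $\Omega(X^{1.5})\to\underline{E}\to X^1\to X^{1.5}$ by a general short exact sequence only yields an admissible monomorphism whose source is some object stably isomorphic to $E$, not $E$ itself. The key device to overcome this is to build the short exact sequence as a \emph{pushout}: choose a representative $G_1\in\cE$ of $X^{1.5}$ together with a projective presentation $0\to\Omega G_1\to P_0\to G_1\to 0$, lift the connecting morphism $\Omega(X^{1.5})\to\underline{E}$ to a map $\Omega G_1\to E$ in $\cE$, and form the pushout of $\Omega G_1\to P_0$ along it. Axiom (E2) guarantees that the resulting map $E\to M^1$ is an admissible monomorphism with cokernel exactly $G_1$ (so $\underline{G_1}=X^{1.5}$), and the pushout square is precisely the defining diagram of Theorem \ref{stable category is left triangulated}, so the induced triangle is the one we started with; by cone-uniqueness $\underline{M^1}\cong X^1\in\cX$, whence $M^1\in\cM$.

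Finally I would iterate this pushout construction along the triangles $\Omega(X^{i+0.5})\to X^{i-0.5}\to X^i\to X^{i+0.5}$ of the coresolution $(d+2)$-angle, at each stage coresolving the representative $G_{i-1}$ of $X^{i-0.5}$ to obtain a short exact sequence $0\to G_{i-1}\to M^i\to G_i\to 0$ with $M^i\in\cM$, and terminate after $d-1$ steps, when $\underline{G_{d-1}}=X^d\in\cX$ forces $G_{d-1}\in\cM$; setting $M^d:=G_{d-1}$ and splicing gives the desired coresolution. The only subtle points left to verify are that the connecting morphisms indeed lift to $\cE$ (immediate, as $\Hom_{\underline{\cE}}$ is a quotient of $\Hom_{\cE}$) and that the pushout triangle matches the prescribed one, which is exactly the content of the construction of triangles in $\underline{\cE}$.
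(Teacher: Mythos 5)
Your first half --- the construction of the resolution $0\to M_d\to\cdots\to M_1\to E\to 0$ --- is correct. Adding a projective summand to a chosen representative to get an admissible epimorphism $M_i\oplus P\to N_{i-1}$, and identifying its kernel with the next cocone, uses uniqueness of the completion of a morphism $B\to C$ to a triangle $\Omega C\to A\to B\to C$; that uniqueness is legitimate in a left triangulated category (combine (T2), (T4) and the five lemma applied to the long exact sequences obtained from $\Hom(X,-)$ and rotation (T3)). This is organized differently from the paper, which lifts the whole $(d+2)$-angle to an exact sequence at once and then strips off a spurious projective summand at the end, but your inductive version works for this half.

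The coresolution half, however, has a genuine gap: the ``cone-uniqueness'' you invoke there is uniqueness of the completion of the \emph{first} morphism $\Omega C\to A$ of a triangle, and this fails in a left triangulated category. Since $\Omega$ is not invertible, triangles can only be rotated to the left, so a triangle is determined up to isomorphism by its last morphism $B\to C$ but \emph{not} by its connecting morphism: the passage from a conflation $0\to E\to M\to G\to 0$ to the stable class of its connecting map kills exactly the classes in the kernel of $\Ext^1_{\cE}(G,E)\to \Hom_{\underline{\cE}}(\Omega \underline{G},\underline{E})$, namely those whose connecting map factors through a projective; these are pushouts of classes in $\Ext^1_{\cE}(G,Q)$ with $Q\in\cP$, which need not vanish. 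Concretely, for $\Lambda=k[x,y]/(x,y)^2$ there is a non-split extension $0\to \Lambda\to B\to k\to 0$ (pushout of $0\to\operatorname{rad}\Lambda\to\Lambda\to k\to 0$ along $x\mapsto y$, $y\mapsto 0$) whose connecting morphism is stably zero, just as for the split extension $0\to\Lambda\to\Lambda\oplus k\to k\to 0$: every stable map into a projective is zero. The two induced triangles $\Omega\underline{k}\to 0\to \underline{B}\to\underline{k}$ and $\Omega\underline{k}\to 0\to\underline{k}\to\underline{k}$ share their first morphism, yet $\underline{B}\not\cong\underline{k}$ ($B$ is a four-dimensional indecomposable, by a socle computation). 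So your pushout does produce a conflation $0\to E\to M^1\to G_1\to 0$ whose triangle has connecting morphism $\delta$, but nothing forces $\underline{M^1}\cong X^1$. This is precisely the point where the paper has to work: it lifts the $(d+2)$-angle to an exact sequence $0\to E\oplus P\xrightarrow{f^1} N^1\to\cdots\to N^d\to 0$ with $N^i\in\cM$, uses the already-proved rigidity (Lemma \ref{rigidity}) and dimension shifting to get $\Ext^1_{\cE}(\Coker f^1,P)=0$, and only then splits off and cancels the projective $P$ via \cite[Corollary 2.18]{Bue10}. An Ext-vanishing of this kind --- equivalently, injectivity of $\Ext^1_{\cE}(G_1,E)\to\Hom_{\underline{\cE}}(\Omega\underline{G_1},\underline{E})$ --- is exactly what your argument silently assumes, and it is not available before the coresolution has been constructed.
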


\begin{proof}
Let $E\in \cE$ be arbitrary, and choose a $(d+2)$-angle
\[
0\to E\to X^1\to \cdots \to X^d\to 0
\]
in $\underline{\cE}$ with $X^i\in \underline{\cM}$ for $1\leq i\leq d$. Hence, by definition of triangles in $\underline{\cE}$ (see Section \ref{Section Left triangulated categories}) there exist a projective object $P\in \cE$ and an exact sequence
\[
0\to E\oplus P\xrightarrow{f^1}N^1\xrightarrow{f^2}\cdots \xrightarrow{f^d}N^d\to 0.
\]
in $\cE$, with $N^i\in \cM$ for $1\leq i\leq d$. Since
\[
0=\Ext^{d-1}_{\cE}(N^d,P)\cong \Ext^{d-2}_{\cE}(\Coker f^{d-2},P)\cong \cdots \cong \Ext^1_{\cE}(\Coker f^1,P)
\]
it follows that the map
\[
\Hom_{\cE}(N^1,P)\to \Hom_{\cE}(E\oplus P,P)\to 0
\]
is an epimorphism. Hence, the inclusion
\[
P\xrightarrow{\begin{bmatrix}0\\1\end{bmatrix}}E\oplus P\xrightarrow{f^1}N^1
\]
is a split monomorphism. The inclusion is also a composite of two admissible monomorphism, and it is therefore admissible. Therefore, its cokernel exists, which we denote by $M^1$. We can therefore write the sequence as
\[
0\to E\oplus P\xrightarrow{\begin{bmatrix}g^1&0\\0&1\end{bmatrix}}M^1\oplus P\xrightarrow{\begin{bmatrix}g^2&0\end{bmatrix}}N^2\xrightarrow{f^3} \cdots \xrightarrow{f^d}N^d\to 0
\]
for some morphisms $g^1,g^2$. It follows from \cite[Corollary 2.18]{Bue10} that the sequence
\[
0\to E\xrightarrow{g^1}M^1\xrightarrow{g^2}N^2\xrightarrow{f^3}\cdots \xrightarrow{f^d}N^d\to 0
\]
is exact. This proves one part of the lemma. The other part is proved dually.
\end{proof}

\begin{proof}[Proof of ``if" part of Theorem \ref{correspondence cluster tilting exact and stable}]
Part \ref{correspondence cluster tilting exact and stable:1} follows from Proposition \ref{characterization of cluster tilting with functorially finite}, \\ Lemma \ref{rigidity} and Lemma \ref{resolving and coresolving by objects in M}. Part \ref{correspondence cluster tilting exact and stable:2} follows from part \ref{correspondence cluster tilting exact and stable:1} together with Remark \ref{on dZ-cluster tilting} \ref{on dZ-cluster tilting:2}.
\end{proof}

\section{Gorenstein projectives}\label{Section Gorenstein projectives}
In this section we consider the subcategory of Gorenstein projective objects in $\cE$. These objects were first introduced in \cite{AB69} for modules over a noetherian ring. We refer to \cite{Che10} for a survey of the theory for Artin algebras, and to \cite{EJ00} for more general rings. 

Let $\cE$ be an exact category with enough projectives $\cP$. Recall that a long exact sequence
\[
P^{\bullet}=\cdots \to P^0\to P^{1}\to \cdots
\]
of projective objects in $\cE$ is called \emphbf{totally acyclic} if the complex
\[
\Hom_{\cE}(P^{\bullet},Q)= \cdots\to \Hom_{\cE}(P^{1},Q)\to \Hom_{\cE}(P^0,Q)\to \cdots
\] 
is acyclic for all projective objects $Q$ in $\cE$. An object $G\in \cE$ is called \emphbf{Gorenstein projective} if there exists a totally acyclic complex $P^{\bullet}$ with 
\[
G=Z^0(P^{\bullet}):=\Ker (P^0\to P^{1}).
\]
We let $\cG\cP(\cE)$ denote the subcategory of $\cE$ consisting of all Gorenstein projective objects. The subcategory $\cG\cP(\cE)$ is closed in $\cE$ under extensions, direct summands, and kernels of admissible epimorphisms. In fact, the proof of \cite[Proposition 2.1.7]{Che10} also works for exact categories. In particular, $\Omega\colon \underline{\cE}\to \underline{\cE}$ restricts to a functor 
\[
\Omega\colon \underline{\cG\cP}(\cE)\to \underline{\cG\cP}(\cE).
\]
The \emphbf{Gorenstein projective dimension} of an object $E\in \cE$, denoted $\dim_{\cG\cP(\cE)}E$, is the smallest integer $n$ such that $\Omega^n(\underline{E})\in \underline{\cG\cP}(\cE)$. We write $\dim_{\cG\cP(\cE)}E=\infty$ if no such integer exists. 

Since $\cG\cP(\cE)$ is an extension closed subcategory of $\cE$, it inherits an exact structure making the inclusion 
\[
\cG\cP(\cE)\to \cE
\]
into an exact functor. Under this exact structure $\cG\cP(\cE)$ becomes a Frobenius exact category with projective/injective objects being the objects in $\cP$, see \cite[Proposition 2.1.11]{Che10}. Hence, $\underline{\cG\cP}(\cE)$ is a triangulated category. In particular, $\Omega\colon \underline{\cG\cP}(\cE)\to \underline{\cG\cP}(\cE)$ is an autoequivalence, and the quasi-inverse of $\Omega$ is the suspension functor for the triangulated category. The triangles in $\underline{\cG\cP}(\cE)$ are precisely all triangles in the left triangulated category $\underline{\cE}$ with components in $\underline{\cG\cP}(\cE)$. In particular, we see that the canonical functor 
\[
\underline{\cE}\to \bZ\underline{\cE} \quad \quad \underline{E}\mapsto (\underline{E},0)
\]
restrict to a functor of triangulated categories
\[
\underline{\cG\cP}(\cE)\to \bZ\underline{\cE}.
\]
This functor is fully faithful since $\Omega$ is an autoequivalence on $\underline{\cG\cP}(\cE)$. The result below gives necessary and sufficient condition for it to be an equivalence. It was first shown in \cite{Buc86} for a noetherian ring.

\begin{Proposition}\label{Buchweitz result}
The essential image of the functor $\underline{\cG\cP}(\cE)\to \bZ\underline{\cE}$ is
\[
\{(\underline{E},n)\in \bZ\underline{\cE}\mid \dim_{\cG\cP(\cE)}E<\infty\}.
\] 
In particular, the functor is an equivalence if and only if $\dim_{\cG\cP(\cE)}E<\infty$ for all $E\in \cE$.
\end{Proposition}  

\begin{proof}
If $(\underline{E},n)\cong (\underline{G},0)$ in $\bZ\underline{E}$ with $G\in \cG\cP(\cE)$, then $\Omega^{k+n}(\underline{E})\cong \Omega^k(\underline{G})$ in $\underline{\cE}$ for some $k>0$. Since $\Omega^k(\underline{G})\in \underline{\cG\cP}(\cE)$ it follows that $E$ has finite Gorenstein projective dimension. Conversely, if $\dim_{\cG\cP(\cE)}E=k<\infty$, then for any $n\in \bZ$ there exist isomorphisms
\[
(\underline{E},n)\cong (\Omega^{k}\underline{E},n-k)\cong (\Omega^{n-k}(\Omega^{k}\underline{E}),0)
\]
and since $\Omega^k\underline{E}\in \underline{\cG\cP}(\cE)$, it follows that $\Omega^{n-k}(\Omega^{k}\underline{E})\in \underline{\cG\cP}(\cE)$. This proves the claim.
\end{proof}

As a corollary of this result one can deduce \cite[Theorem 3.6]{BJO15} in the case of Artin algebras.

\begin{Proposition}
Let $\Lambda$ be an Artin algebra. Then the canonical functor
\[
\underline{\cG\cP}(\md \Lambda)\to D^b(\md \Lambda)/\perf \Lambda 
\]
sending $M$ to itself considered as a stalk complex in degree $0$ is an equivalence if and only if $\Lambda$ is Iwanaga-Gorenstein. 
\end{Proposition}

\begin{proof}
This follows from Theorem \ref{equivalence of singularity category} \ref{equivalence of singularity category:2} and Proposition \ref{Buchweitz result} applied to $\cE=\md \Lambda$ together with \cite[Proposition 4.2]{AR91a}.
\end{proof}

We now relate this to the theory of cluster tilting subcategories.

\begin{Theorem}\label{Cluster tilting inside Gorenstein projectives}
Assume $\dim_{\cG\cP(\cE)}E<\infty$ for all $E\in \cE$. Let $\cM$ be a $d\bZ$-cluster tilting subcategory of $\cE$. Then $\cM\cap \cG\cP(\cE)$ is a $d\bZ$-cluster tilting subcategory of $\cG\cP(\cE)$.
\end{Theorem}

\begin{proof}
Since $\cG\cP(\cE)$ is Frobenius and $\cM\cap \cG\cP(\cE)$ contains $\cP$, it follows that $\cM\cap \cG\cP(\cE)$ is a $d\bZ$-cluster tilting subcategory of $\cG\cP(\cE)$ if and only if $\underline{\cM\cap \cG\cP(\cE)}$ is a $d\bZ$-cluster tilting subcategory of the triangulated category $\underline{\cG\cP}(\cE)$. This is the easy case of Theorem \ref{correspondence cluster tilting exact and stable}. Now by Lemma \ref{Buchweitz result} we have an equivalence of triangulated categories 
\[
\underline{\cG\cP}(\cE)\xrightarrow{\cong} \bZ\underline{\cE}
\]
and since $d\bZ\underline{\cM}$ is a $d\bZ$-cluster tilting subcategory of $\bZ\underline{\cE}$ by Theorem \ref{stablization d-cluster tilting}, the preimage of $d\bZ\underline{\cM}$ is a $d\bZ$ cluster tilting subcategory of $\underline{\cG\cP}(\cE)$. Explicitly, the preimage consists of all objects $\underline{G}\in \underline{\cG\cP}(\cE)$ such that $\Omega^{dk}(\underline{G})\in \underline{\cM}$ for some integer $k\geq 0$. To show that this is equal to $\underline{\cM\cap \cG\cP(\cE)}$, we only need to show that such a $G$ is contained in $\cM$. Note first that since $G\in \cG\cP(\cE)$, it follows that $\Ext^i_{\cE}(G,P)=0$ for all $P\in \cP$. Hence, by a dimension shifting argument we get that
\[
\Ext^i_{\cE}(G,E)\cong \Ext^{i+j}_{\cE}(G,\Omega^j E)
\]
for any $E\in \cE$ and any $j\geq 0$. Now let $M\in \cM$ be arbitrary, and choose an integer $k\geq 0$ such that $\Omega^{dk}(\underline{G})\in \underline{\cM}$ and $\Omega^{dk}(\underline{M})\in \underline{\cG\cP}(\cE)$. It follows that
\[
\Ext^i_{\cE}(G,M)\cong \Ext^{i+dk}_{\cE}(G,\Omega^{dk}M)\cong \Ext^{i}_{\cE}(\Omega^{dk} G,\Omega^{dk}M)=0
\]
for $1\leq i\leq d-1$ where we use dimension shifting and the fact that $\Omega^{dk}M\in \cM$ since $\cM$ is $d\bZ$-cluster tilting. Since $M\in \cM$ was arbitrary, it follows that $G\in \cM$, which proves the claim.
\end{proof}

\section{Examples}\label{Section Example}

In this section we compute the singularity category for a higher Nakayama algebras of type $\bA^{\infty}_{\infty}$, see \cite{JK19}.  For higher Nakayama algebras of type $\tilde{\bA}$ we refer to Section 4.3 in \cite{McM18}.

\begin{Example}\label{Example}
We follow the notation in \cite{JK19}. Let $k$ be a field. Let $\underline{l}=(\cdots,l_{-1},l_0,l_1,\cdots)$ be the Kupisch Series of type $\bA^{\infty}_{\infty}$ \cite[Definition 3.10]{JK19} given by 
\begin{equation*}
l_i=
\begin{cases}
2, & \text{if}\ i \equiv 1 \operatorname{mod} 4 \\
3, & \text{otherwise}
\end{cases}
\end{equation*}
Let $A^{(2)}_{\underline{l}}$ be the $2$-Nakayama $k$-algebra with Kupisch series $\underline{l}$  \cite[Definition 3.13]{JK19}. Explicitly, $A^{(2)}_{\underline{l}}$ is the $k$-linear category given by the infinite periodic quiver
\begin{align}
\xymatrix@!=.5pc{ & & & 02 \ar[rd] && 13 \ar@{-->}[ll] \ar[rd] && 24 \ar@{-->}[ll] \ar[rd] & & & &  \\
\cdots \ar[rd] & &  01 \ar@{-->}[ll] \ar[ru]^{} \ar[rd] && 12 \ar@{-->}[ll] \ar[ru] \ar[rd] && 23 \ar[ru] \ar[rd] \ar@{-->}[ll] && 34 \ar[rd] \ar@{-->}[ll] && 45 \ar@{-->}[ll] \ar[rd] \ar[ru] & \cdots \\ 
& 00 \ar[ru]^{} && 11  \ar@{-->}[ll] \ar[ru] && 22 \ar@{-->}[ll] \ar[ru] && 33\ar[ru] \ar@{-->}[ll] && 44  \ar@{-->}[ll]\ar[ru] & & \ar@{-->}[ll]} 
\end{align}
with period $4$, and with relations making all squares commute, and such that the following composites are $0$.
\begin{align*}
& (a,a)\to (a,a+1)\to (a+1,a+1) & \text{for all }  a\in \bZ \\
& (a,a+2)\to (a+1,a+2)\to (a+1,a+3) & \text{if }  a\equiv 0,1 \operatorname{mod} 4 \\
& (a,a+1)\to (a+1,a+1)\to (a+1,a+2) & \text{if }  a\equiv 3 \operatorname{mod} 4.
\end{align*}
By \cite[Theorem 3.16]{JK19} we know that the category of finitely presented modules $\md A^{(2)}_{\underline{l}}$ has a $2\bZ$-cluster tilting subcategory $\cM^{(2)}_{\underline{l}}$. Explicitly, $\cM^{(2)}_{\underline{l}}$ is the $k$-linear additive category given by the infinite periodic quiver

\begin{equation}\label{Example cluster tilting subcategory}
\resizebox{\textwidth}{!}{\xymatrix@!=1pc{ & & \color{red} 002\color{black}\ar[r] & \color{red}012\color{black}\ar[r]\ar[rd] & \color{red}022\color{black}\ar[rd] & \color{red}113\color{black} \ar[r]& \color{red}123\color{black}\ar[r]\ar[rd] & \color{red}133\color{black}\ar[rd] & \color{red}224\color{black}\ar[r] & \color{red}234\color{black}\ar[r]\ar[rd] & \color{red}244\color{black}\ar[rd] & & & &  \\
\cdots & \color{red}001\color{black} \ar[ru]  \ar[r] & \color{red}011\color{black} \ar[ru] \ar[rd] & & 112 \ar[ru]\ar[r] & 122\ar[ru] \ar[rd] & & 223\ar[ru] \ar[r] & 233\ar[ru] \ar[rd] & & 334 \ar[r] & 344 \ar[rd] & & \color{red}445\color{black} \ar[ru] \ar[r] & \cdots \\ 
 000 \ar[ru] &&& 111\ar[ru] &&& 222\ar[ru] &&& 333\ar[ru] &&& 444 \ar[ru] & & }} 
\end{equation}
with period $4$, and with relations
\[
\alpha_j(\underline{a}+e_i)\alpha_i(\underline{a})-\alpha_i(\underline{a}+e_j)\alpha_j(\underline{a})
\]
for any vertex $\underline{a}=(a_1,a_2,a_3)$ and integers $1\leq i,j\leq 3$, excluding the composites
\begin{align*}
012\to 112\to 113 \quad 022\to 122\to 123 \quad 123\to 223\to 224 \\
 133\to 233\to 234 \quad 344\to 444\to 445
\end{align*}
modulo $4$. Here $e_1=(1,0,0)$, $e_2=(0,1,0)$ and $e_3=(0,0,1)$ are the unit vectors and 
\[
\alpha_i(\underline{a})\colon \underline{a} \to \underline{a} + e_i  
\] 
is defined to be the unique arrow if it exists, and $0$ otherwise. In particular, some of the relations are zero relations, for example $000\to 001\to 011$. Also, similar as in Proposition 2.22 in \cite{JK19} the projective modules in $\md A^{(2)}_{\underline{l}}$ correspond to the vertices $(\lambda_1,\lambda_2,\lambda_3)$ with $\lambda_1=\lambda_3+1-l_{\lambda_3}$, which are precisely
\begin{multline*}
\cS:= \{(001),(011),(002),(012),(022),(113),(123), \\
(133),(224),(234),(244),(445) \operatorname{mod} 4\}
\end{multline*}
coloured in red in diagram \eqref{Example cluster tilting subcategory}.
Also, as obtained in the proof of Proposition 2.25 in \cite{JK19}, the $2$-syzygy of a non-projective object corresponding to a vertex $(\lambda_1,\lambda_2,\lambda_3)$ is
\[
\Omega^2(\lambda_1,\lambda_2,\lambda_3)= (\lambda_3+1-l_{\lambda_3},\lambda_1-1,\lambda_2-1).
\]
A simple computation shows that
\begin{align*}
& \Omega^2(444)= (233) \quad \Omega^2(233)=(112) \quad \Omega^2(112)=(000) \\
& \Omega^2(344)= (223) \quad \Omega^2(223)=(111) \quad \Omega^2(111)=(000) \\
& \Omega^2(334)= (222) \quad \Omega^2(222)=(011)\in \cS \\
& \Omega^2(333)= (122) \quad \Omega^2(122)=(001)\in \cS. 
\end{align*}
Hence, in $2\bZ\underline{\cM^{(2)}_{\underline{l}}}$ we have that any nonzero indecomposable object is isomorphic to $(000,d)$ for some integer $d\in \bZ$. Also, we have that 
\begin{equation*}
\Hom_{2\bZ\underline{\cM^{(2)}_{\underline{l}}}}((000,d),(000,d'))=
\begin{cases}
0, & \text{if}\ d\neq d' \\
k, & \text{if}\ d=d'.
\end{cases}
\end{equation*}
This shows that as additive categories we have an equivalence
\[
2\bZ\underline{\cM^{(2)}_{\underline{l}}}\cong \bigoplus_{i\in \bZ} \md k.
\]
\end{Example}

\begin{Example}\label{Example 2}
We continue with the notation of the previous example. Consider the canonical automorphism $\phi\colon A^{(2)}_{\underline{l}}\to A^{(2)}_{\underline{l}}$ sending a vertex $(i,j)$ to $(i-4,j-4)$. Let $\phi^*\colon \md A^{(2)}_{\underline{l}}\to \md A^{(2)}_{\underline{l}}$ be the induced equivalence, which sends a module $M$ to $M\circ \phi^{-1}$. Then $\phi^*$ restricts to an equivalence 
\[
\phi^*\colon \cM^{(2)}_{\underline{l}}\to \cM^{(2)}_{\underline{l}} 
\]
sending a vertex $(a_1,a_2,a_3)$ to $(a_1-4,a_2-4,a_3-4)$. This follows from the definition of the embedding $\cM^{(2)}_{\underline{l}}\to \md A^{(2)}_{\underline{l}}$, see \cite[Proposition 1.12]{JK19}. Let
\[
\widetilde{A}_{\underline{l'}}^{(2)}=A^{(2)}_{\underline{l}}/\phi
\] 
be the orbit category. This is a higher Nakayama algebra of type $\widetilde{\bA}$ with Kupisch Series $\underline{l'}=(3,2,3,3)$, see \cite[Definition 4.11]{JK19}. The canonical functor $F\colon A^{(2)}_{\underline{l}}\to A^{(2)}_{\underline{l}}/\phi$ induces a restriction functor
\[
F^*\colon \Md \widetilde{A}_{\underline{l'}}^{(2)}\to \Md A^{(2)}_{\underline{l}}
\]
with left adjoint (called the push down functor)
\[
F_*\colon \Md A^{(2)}_{\underline{l}}\to \Md \widetilde{A}_{\underline{l'}}^{(2)}. 
\] 
The functor $F_*$ is exact and restricts to a well-defined functor between the categories of finitely presented modules
\[
F_*\colon \md A^{(2)}_{\underline{l}}\to \md \widetilde{A}_{\underline{l'}}^{(2)}
\] 
see \cite[Section 14.3]{GR92}. Now the subcategory $\cM^{(2)}_{\underline{l'}}:= F_*(\cM^{(2)}_{\underline{l}})$ is $2\bZ$-cluster tilting in $\md \widetilde{A}_{\underline{l'}}^{(2)}$ by \cite[Theorem 4.12]{JK19}. It follows from \cite[Section 14.4]{GR92} that there exists an equivalence $\cM^{(2)}_{\underline{l'}}\cong \cM^{(2)}_{\underline{l}}/\phi^*$ such that the composite 
\[
\cM^{(2)}_{\underline{l}}\xrightarrow{F_*} \cM^{(2)}_{\underline{l'}}\xrightarrow{\cong} \cM^{(2)}_{\underline{l}}/\phi^*
\]
is just the natural projection functor. Now $F_*$ is exact and preserves projectives, and hence the computations of the projectives and the syzygies in Example \ref{Example} are also valid for $\cM^{(2)}_{\underline{l'}}$. It follows that as additive categories we have an equivalence
\[
2\bZ\underline{\cM^{(2)}_{\underline{l'}}}\cong \bigoplus_{1\leq i\leq 4} \md k.
\]
Finally, note that the module corresponding to vertex $(344)$ is injective in $\md \widetilde{A}_{\underline{l'}}^{(2)}$, and by the computations of the syzygy above we see that it has infinite projective dimension. This shows that algebra $\widetilde{A}_{\underline{l'}}^{(2)}$ is not Iwanaga-Gorenstein.
\end{Example}

\bibliography{Mybibtex}

\begin{thebibliography}{10}

\bibitem{Ami09}
Claire Amiot.
\newblock Cluster categories for algebras of global dimension 2 and quivers
  with potential.
\newblock {\em Ann. Inst. Fourier (Grenoble)}, 59(6):2525--2590, 2009.

\bibitem{AO14}
Claire Amiot and Steffen Oppermann.
\newblock Higher preprojective algebras and stably {C}alabi-{Y}au properties.
\newblock {\em Math. Res. Lett.}, 21(4):617--647, 2014.

\bibitem{ABM98}
Ibrahim Assem, Apostolos Beligiannis, and Nikolaos Marmaridis.
\newblock Right triangulated categories with right semi-equivalences.
\newblock In {\em Algebras and modules, {II} ({G}eiranger, 1996)}, volume~24 of
  {\em CMS Conf. Proc.}, pages 17--37. Amer. Math. Soc., Providence, RI, 1998.

\bibitem{ASS06}
Ibrahim Assem, Daniel Simson, and Andrzej Skowroński.
\newblock {\em Elements of the representation theory of associative algebras.
  {V}ol. 1}, volume~65 of {\em London Mathematical Society Student Texts}.
\newblock Cambridge University Press, Cambridge, 2006.
\newblock Techniques of representation theory.

\bibitem{AB69}
Maurice Auslander and Mark Bridger.
\newblock {\em Stable module theory}.
\newblock Memoirs of the American Mathematical Society, No. 94. American
  Mathematical Society, Providence, R.I., 1969.

\bibitem{AR91a}
Maurice Auslander and Idun Reiten.
\newblock Cohen-{M}acaulay and {G}orenstein {A}rtin algebras.
\newblock In {\em Representation theory of finite groups and finite-dimensional
  algebras ({B}ielefeld, 1991)}, volume~95 of {\em Progr. Math.}, pages
  221--245. Birkh\"auser, Basel, 1991.

\bibitem{ARS95}
Maurice Auslander, Idun Reiten, and Sverre~O. Smalø.
\newblock {\em Representation theory of {A}rtin algebras}, volume~36 of {\em
  Cambridge Studies in Advanced Mathematics}.
\newblock Cambridge University Press, Cambridge, 1995.

\bibitem{Bel00}
Apostolos Beligiannis.
\newblock The homological theory of contravariantly finite subcategories:
  {A}uslander-{B}uchweitz contexts, {G}orenstein categories and
  (co-)stabilization.
\newblock {\em Comm. Algebra}, 28(10):4547--4596, 2000.

\bibitem{BM94}
Apostolos Beligiannis and Nikolaos Marmaridis.
\newblock Left triangulated categories arising from contravariantly finite
  subcategories.
\newblock {\em Comm. Algebra}, 22(12):5021--5036, 1994.

\bibitem{BJO15}
Petter~Andreas Bergh, David~A. Jorgensen, and Steffen Oppermann.
\newblock The {G}orenstein defect category.
\newblock {\em Q. J. Math.}, 66(2):459--471, 2015.

\bibitem{Buc86}
Ragnar-Olaf Buchweitz.
\newblock Maximal {C}ohen-{M}acauley modules and {T}ate-cohomology over
  {G}orenstein rings.
\newblock 1986.

\bibitem{Bue10}
Theo B\"uhler.
\newblock Exact categories.
\newblock {\em Expo. Math.}, 28(1):1--69, 2010.

\bibitem{Che10}
Xiao-Wu Chen.
\newblock Gorenstein homological algebra of artin algebras.
\newblock Postdoctoral Report, USTC, 2010.

\bibitem{EJ00}
Edgar~E. Enochs and Overtoun M.~G. Jenda.
\newblock {\em Relative homological algebra}, volume~30 of {\em De Gruyter
  Expositions in Mathematics}.
\newblock Walter de Gruyter \& Co., Berlin, 2000.

\bibitem{GR92}
P.~Gabriel and A.~V. Ro\u{\i}ter.
\newblock Representations of finite-dimensional algebras.
\newblock In {\em Algebra, {VIII}}, volume~73 of {\em Encyclopaedia Math.
  Sci.}, pages 1--177. Springer, Berlin, 1992.
\newblock With a chapter by B. Keller.

\bibitem{GKO13}
Christof Geiss, Bernhard Keller, and Steffen Oppermann.
\newblock {$n$}-angulated categories.
\newblock {\em J. Reine Angew. Math.}, 675:101--120, 2013.

\bibitem{Guo11}
Lingyan Guo.
\newblock Cluster tilting objects in generalized higher cluster categories.
\newblock {\em J. Pure Appl. Algebra}, 215(9):2055--2071, 2011.

\bibitem{Hap88}
Dieter Happel.
\newblock {\em Triangulated categories in the representation theory of
  finite-dimensional algebras}, volume 119 of {\em London Mathematical Society
  Lecture Note Series}.
\newblock Cambridge University Press, Cambridge, 1988.

\bibitem{Hel60}
Alex Heller.
\newblock The loop-space functor in homological algebra.
\newblock {\em Trans. Amer. Math. Soc.}, 96:382--394, 1960.

\bibitem{Hel68}
Alex Heller.
\newblock Stable homotopy categories.
\newblock {\em Bull. Amer. Math. Soc.}, 74:28--63, 1968.

\bibitem{HI11a}
Martin Herschend and Osamu Iyama.
\newblock {$n$}-representation-finite algebras and twisted fractionally
  {C}alabi-{Y}au algebras.
\newblock {\em Bull. Lond. Math. Soc.}, 43(3):449--466, 2011.

\bibitem{HI11}
Martin Herschend and Osamu Iyama.
\newblock Selfinjective quivers with potential and 2-representation-finite
  algebras.
\newblock {\em Compos. Math.}, 147(6):1885--1920, 2011.

\bibitem{HIO14}
Martin Herschend, Osamu Iyama, and Steffen Oppermann.
\newblock {$n$}-representation infinite algebras.
\newblock {\em Adv. Math.}, 252:292--342, 2014.

\bibitem{Iya07}
Osamu Iyama.
\newblock Auslander correspondence.
\newblock {\em Adv. Math.}, 210(1):51--82, 2007.

\bibitem{Iya07a}
Osamu Iyama.
\newblock Higher-dimensional {A}uslander-{R}eiten theory on maximal orthogonal
  subcategories.
\newblock {\em Adv. Math.}, 210(1):22--50, 2007.

\bibitem{Iya08}
Osamu Iyama.
\newblock Auslander-{R}eiten theory revisited.
\newblock In {\em Trends in representation theory of algebras and related
  topics}, EMS Ser. Congr. Rep., pages 349--397. Eur. Math. Soc., Z\"urich,
  2008.

\bibitem{Iya11}
Osamu Iyama.
\newblock Cluster tilting for higher {A}uslander algebras.
\newblock {\em Adv. Math.}, 226(1):1--61, 2011.

\bibitem{Iya18}
Osamu Iyama.
\newblock Tilting {C}ohen-{M}acaulay representations.
\newblock In {\em Proceedings of the {I}nternational {C}ongress of
  {M}athematicians---{R}io de {J}aneiro 2018. {V}ol. {II}. {I}nvited lectures},
  pages 125--162. World Sci. Publ., Hackensack, NJ, 2018.

\bibitem{IJ17}
Osamu Iyama and Gustavo Jasso.
\newblock Higher {A}uslander correspondence for dualizing {$R$}-varieties.
\newblock {\em Algebr. Represent. Theory}, 20(2):335--354, 2017.

\bibitem{IO11}
Osamu Iyama and Steffen Oppermann.
\newblock {$n$}-representation-finite algebras and {$n$}-{APR} tilting.
\newblock {\em Trans. Amer. Math. Soc.}, 363(12):6575--6614, 2011.

\bibitem{IO13}
Osamu Iyama and Steffen Oppermann.
\newblock Stable categories of higher preprojective algebras.
\newblock {\em Adv. Math.}, 244:23--68, 2013.

\bibitem{IY08}
Osamu Iyama and Yuji Yoshino.
\newblock Mutation in triangulated categories and rigid {C}ohen-{M}acaulay
  modules.
\newblock {\em Invent. Math.}, 172(1):117--168, 2008.

\bibitem{Jas16}
Gustavo Jasso.
\newblock {$n$}-abelian and {$n$}-exact categories.
\newblock {\em Math. Z.}, 283(3-4):703--759, 2016.

\bibitem{JK16}
Gustavo Jasso and Julian K{\"u}lshammer.
\newblock The naive approach for constructing the derived category of a
  d-abelian category fails, 2016.
\newblock arXiv:1604.03473.

\bibitem{JK19}
Gustavo Jasso and Julian K{\"u}lshammer.
\newblock Higher {N}akayama algebras {I}: {C}onstruction.
\newblock {\em Adv. Math.}, 351:1139--1200, 2019.
\newblock With an appendix by Julian K{\"u}lshammer and Chrysostomos
  Psaroudakis and an appendix by Sondre Kvamme.

\bibitem{JK19a}
Gustavo Jasso and Sondre Kvamme.
\newblock An introduction to higher {A}uslander-{R}eiten theory.
\newblock {\em Bull. Lond. Math. Soc.}, 51(1):1--24, 2019.

\bibitem{J16}
Peter Jorgensen.
\newblock Torsion classes and t-structures in higher homological algebra.
\newblock {\em Int. Math. Res. Not. IMRN}, (13):3880--3905, 2016.

\bibitem{Kel90}
Bernhard Keller.
\newblock Chain complexes and stable categories.
\newblock {\em Manuscripta Math.}, 67(4):379--417, 1990.

\bibitem{KV87}
Bernhard Keller and Dieter Vossieck.
\newblock Sous les cat\'egories d\'eriv\'ees.
\newblock {\em C. R. Acad. Sci. Paris S\'er. I Math.}, 305(6):225--228, 1987.

\bibitem{Kra10}
Henning Krause.
\newblock Localization theory for triangulated categories.
\newblock In {\em Triangulated categories}, volume 375 of {\em London Math.
  Soc. Lecture Note Ser.}, pages 161--235. Cambridge Univ. Press, Cambridge,
  2010.

\bibitem{KM19}
Sondre Kvamme and Ren\'e Marczinzik.
\newblock Co-{G}orenstein algebras.
\newblock {\em Appl. Categ. Structures}, 27(3):277--287, 2019.

\bibitem{Lin17}
Zengqiang Lin.
\newblock Right {$n$}-angulated categories arising from covariantly finite
  subcategories.
\newblock {\em Comm. Algebra}, 45(2):828--840, 2017.

\bibitem{McM18}
Jordan McMahon.
\newblock Fabric idempotents and homological dimensions, 2018.
\newblock arXiv:1803.07186.

\bibitem{SS07a}
Daniel Simson and Andrzej Skowro\'{n}ski.
\newblock {\em Elements of the representation theory of associative algebras.
  {V}ol. 2}, volume~71 of {\em London Mathematical Society Student Texts}.
\newblock Cambridge University Press, Cambridge, 2007.
\newblock Tubes and concealed algebras of Euclidean type.

\bibitem{SS07}
Daniel Simson and Andrzej Skowro\'{n}ski.
\newblock {\em Elements of the representation theory of associative algebras.
  {V}ol. 3}, volume~72 of {\em London Mathematical Society Student Texts}.
\newblock Cambridge University Press, Cambridge, 2007.
\newblock Representation-infinite tilted algebras.

\end{thebibliography}
\bibliographystyle{plain} 

\end{document}